\numberwithin{equation}{section}
\newtheorem{thm}{Theorem}[section]
\newtheorem{lemma}[thm]{Lemma}
\newtheorem{remark}[thm]{Remark}
\begin{document}
\title[LIMIT OF EXTREME EIGENVALUES OF RANDOM QUATERNION MATRICES]{ON THE  LIMIT OF EXTREME EIGENVALUES OF LARGE DIMENSIONAL RANDOM QUATERNION MATRICES}

\author{Yanqing Yin,\ \ Zhidong Bai, \ \ Jiang Hu}
\thanks{Z. D. Bai was partially supported by CNSF  11171057, PCSIRT, and Fundamental Research Funds for the Central Universities; J. Hu was partially supported by a grant CNSF 11301063.
}

\address{KLASMOE and School of Mathematics \& Statistics, Northeast Normal University, Changchun, P.R.C., 130024.}
\email{yinyq799@nenu.edu.cn}
\address{KLASMOE and School of Mathematics \& Statistics, Northeast Normal University, Changchun, P.R.C., 130024.}
\email{baizd@nenu.edu.cn}
\address{KLASMOE and School of Mathematics \& Statistics, Northeast Normal University, Changchun, P.R.C., 130024.}
\email{huj156@nenu.edu.cn}

\subjclass{Primary 15B52, 60F15, 62E20;
Secondary 60F17} \keywords{Quaternion matrices, GSE, Extreme Eigenvalues}

\maketitle
\begin{abstract}
Since E.P.Wigner (1958) established his famous semicircle law,
lots of attention has been paid by physicists, probabilists and statisticians to study the asymptotic properties of the largest eigenvalues for random matrices.
Bai and Yin (1988) obtained  the necessary and sufficient conditions for the strong convergence of the extreme eigenvalues of a Wigner matrix.
In this paper, we consider the case of quaternion self-dual Hermitian matrices.
We prove the necessary and sufficient conditions for the strong convergence of extreme eigenvalues of quaternion self-dual Hermitian matrices corresponding to the Wigner case.
\end{abstract}

\section{Introduction }

In nuclear physics, the energy levels are described by the eigenvalues of the Hamiltonian $\mathbf H_n$,
which is regarded as an Hermitian matrix of very large order $n$. In the absence of any precise knowledge of $\mathbf H_n$,
one assumes a reasonable probability distribution for its elements, from which we can deduce statistical properties of its spectrum.
In history, the idea of a statistical mechanics of nuclei based on an ensemble of systems is due to Wigner \cite{wigner1951statistical,wigner1955,wigner1957},
and is further developed by Dyson \cite{dyson1962threefold}, Gaudin and Mehta \cite{mehta2004random}. Those foundational works can be viewed as the cornerstone of building-up the random matrix theory (RMT). The definition of random matrix ensembles consists of two parts \cite{ginibre:440}:
the algebraic structure of the matrices $\mathbf H_n$ and the probability distributions of  $\mathbf H_n$.
In accordance to the consequences of time-reversal invariance, three kinds of ensembles were proposed: \emph{Gaussian orthogonal ensemble} (\emph{GOE}),
 \emph{Gaussian unitary ensemble} (\emph{GUE}), and \emph{Gaussian symplectic ensemble} (\emph{GSE}) (See \cite{mehta2004random} for details of those models ).
In algebraic structure, the matrices $\mathbf H_n$ will be $n\times n$ real symmetric matrix, $n\times n$ complex Hermitian matrix and $n\times n$ quaternion self-dual Hermitian matrix, respectively.
In the past several decades, Wigner's program has drawn considerable attention from a large number of researchers and a huge body of literatures is aiming to get better understanding on those models.
 Preparing for what follows in this paper, it is necessary here to provide a short introduction to the quaternion self-dual Hermitian matrix since its structure is not as clear as the first two classes.

Quaternions were invented in 1843 by the Irish mathematician William Rowen Hamilton after a lengthy struggle to extend the theory of complex numbers to three dimensions \cite{hamilton1866elements,1973},
and it is well known that the quaternion field $\mathbb{Q}$ can be represented as a two-dimensional complex vector space \cite{chevalley1946lie}.
 Thus this representation associates any $n\times n$ quaternion matrix with a $2n\times2n$ complex matrix.
 Note that in this paper, we only use the $2n\times2n$ complex representation of the quaternion matrices.
Define four $2\times 2$ matrices:
$$\mathbf{e}=\left(
              \begin{array}{cc}
                1 & 0 \\
                0 & 1 \\
              \end{array}
            \right), \quad
\mathbf{i}=\left(
              \begin{array}{cc}
                i & 0 \\
                0 & -i \\
              \end{array}
            \right), \quad
\mathbf{j}=\left(
              \begin{array}{cc}
                0 & 1 \\
                -1 & 0 \\
              \end{array}
            \right), \quad
\mathbf{k}=\left(
              \begin{array}{cc}
                0 & i \\
                i & 0 \\
              \end{array}
            \right)
$$
where $i=\sqrt{-1}$.
We can verify that:
 \begin{gather*}
\mathbf{i} ^2=\mathbf{j} ^2=\mathbf{k} ^2=-\mathbf{e},\quad \mathbf{i} =\mathbf{j} \mathbf{k} =-\mathbf{k} \mathbf{j} ,\\
 \mathbf{j} =\mathbf{i} \mathbf{k} =-\mathbf{k} \mathbf{i} ,\quad \mathbf{k} =\mathbf{i} \mathbf{j} =-\mathbf{j} \mathbf{i} .
\end{gather*}
For any real numbers $a,b,c,d,$ let
$$q=a\mathbf{e}+b\mathbf{i}+c\mathbf{j}+d\mathbf{k}=\left(
                                                      \begin{array}{cc}
                                                        a+bi & c+di \\
                                                        -c+di & a-bi \\
                                                      \end{array}
                                                    \right)=\left(
                                                              \begin{array}{cc}
                                                                \lambda & \omega \\
                                                             -\bar{\omega} & \bar{\lambda} \\
                                                              \end{array}
                                                            \right),
$$
then $q$ is a quaternion. The quaternion conjugate of $q$ is defined by
$$\bar{q}=a\mathbf{e}-b\mathbf{i}-c\mathbf{j}-d\mathbf{k}=\left(
                                                      \begin{array}{cc}
                                                        a-bi & -c-di \\
                                                        c-di & a+bi \\
                                                      \end{array}
                                                    \right)=\left(
                                                              \begin{array}{cc}
                                                                \bar{\lambda} & -\omega \\
                                                             \bar{\omega} & \lambda \\
                                                              \end{array}
                                                            \right),
$$
and the quaternion norm of $q$ is defined by
$$\| q \| = \sqrt {{a^2} + {b^2} + {c^2} + {d^2}}  = \sqrt {{{\left| \lambda  \right|}^2} + {{\left| \omega  \right|}^2}}. $$
\begin{remark}\label{re:1}
We note that when we use the $2\times2$ complex representation of quaternion $q$,
the quaternion conjugate is in fact the ordinary conjugate transpose and $\| q \|^2$ is equal to the determinant of $q$.
\end{remark}
An $n\times n$ quaternion self-dual Hermitian matrix $\mathbf{H_n}=(x_{jk})$ is a matrix whose entries $x_{jk} \ (j,k=1,\cdots,n)$ are quaternions and satisfy $x_{jk}=\bar{x}_{kj}$.
Using the $2n\times2n$ complex representation of $\mathbf H_n$ and according to Remark \ref{re:1}, $\mathbf H_n$ is obviously a $2n\times2n$ Hermitian matrix.
Then, we represent the entries of $\mathbf H_n$ as
$$x_{jk} = \left( {\begin{array}{*{20}{c}}
a_{jk} + b_{jk}i & c_{jk} + d_{jk}i \\
-{ c }_{jk}+{ d }_{jk}i&{ a }_{jk}-{ b }_{jk}i
\end{array}} \right)
=\left( {\begin{array}{*{20}{c}}
\lambda_{jk} &\omega_{jk} \\
{ - \overline \omega }_{jk}&{\overline \lambda }_{jk}
\end{array}} \right), 1\le j<k\le n, $$
 and
$x_{jj} = \left( {\begin{array}{*{20}{c}}
a_{jj} & 0 \\
0& a_{jj}
\end{array}} \right),$ where $ a_{jk},b_{jk},c_{jk},d_{jk} \in \mathbb{R} \ {\rm and} \ 1\le j,k \le n.$  Thus we can study the eigenvalues in the ordinary sense \cite{ginibre:440}.
\begin{remark}\label{a2}
It is known (see \cite{zhang1997quaternions}) that the multiplicities of all the eigenvalues (obviously they are all real) of $\mathbf H_n$ are even and at least 2.
\end{remark}
In RMT, suppose an $n\times n$ matrix $\mathbf W_n$ is Hermitian and let real numbers $s_1\geq s_2\geq\cdots\geq s_n$ denote the eigenvalues,
then the empirical spectral distribution (ESD) of $\mathbf W_n$ is defined as: $${F^{\mathbf{W_n}}}(x) = \frac{1}{n}\sum\limits_{i = 1}^n {I({s _i} \le x)},$$
where ${I(\cdot})$ is the indicator function. It is well known that if the entries on and above the diagonal of $\mathbf W_n$ are independent random variables with zero-mean and variance $\sigma^2$,
 then ${F^{\frac{1}{\sqrt n}{\mathbf W_n}}}(x)$ converges almost surely (a.s.) to a non-random distribution $F$ with density function:
\begin{align}
    f(x)=\frac{1}{2\pi\sigma^2}\sqrt{4\sigma^2-x^2},~~x\in[-2\sigma,2\sigma],\label{01}
\end{align}
which is  known as the semicircle law. The semicircle law plays a very important role in many fields. However,
one can not get the description of the behavior of the largest and the smallest eigenvalues from this distribution function.
Juh\'{a}sz \cite{juhasz1978spectrum} and F\"{u}redi and Koml\'{o}s \cite{Furedi1981} firstly studied the asymptotic properties of the extreme eigenvalues for Wigner matrix.
 In 1988, Bai and Yin \cite{BaiYin1993} got the necessary and sufficient conditions for the almost surely convergence of the extreme eigenvalues.
Then in \cite{tracy1994level}, Tracy and Widom derived the limiting distribution of the largest eigenvalue of \emph{GOE},
 \emph{GUE}, and \emph{GSE}. In \cite{yinbai2013}, the semicircular law for quaternion self-dual Hermitian matrices was proved under the most general condition of finite second moment. More recent results can be found in \cite{o2013universality,tao2011wigner,Wang2012,PhysRevE.77.041108,Dumitriu2008} and references therein.

\section{Main theorem }

In this paper, we study the asymptotic properties of the extreme eigenvalues for quaternion self-dual Hermitian matrices without Gaussian assumption.
We shall show that the extreme eigenvalues of a high dimensional quaternion self-dual Hermitian matrix remain in certain bounded intervals. The main theorem can be described as following.
\begin{thm}\label{th1}
Suppose ${\mathbf H_n}=(x_{jk})$,
where $x_{jk}=a_{jk}\mathbf{e}+b_{jk}\mathbf{i}+c_{jk}\mathbf{j}+d_{jk}\mathbf{k}$
is a quaternion self-dual Hermitian matrix whose entries on and above the diagonal are iid quaternion random variables.
Then the largest eigenvalue of ${\mathbf Q_n}=\frac{1}{\sqrt n}{\mathbf H_n}$ tends to $\xi$ almost surely if and only if the following conditions  hold:
\begin{equation}\label{con}
\begin{split}
({\rm i})\quad &{\rm E}\left\|x_{11}^+\right\|^2<\infty,\quad where \ x^+={\rm max}(x,0), \\
({\rm ii})\quad &{\rm E}a_{12} \leq 0 \quad and \quad {\rm E}b_{12}={\rm E}c_{12}={\rm E}d_{12}=0, \\
({\rm iii})\quad &{\rm E}\left\|x_{12}-{\rm E}x_{12}\right\|^2=\sigma^2, \mbox{ and } \xi=2\sigma,\\
({\rm iv})\quad &{\rm E}\left\|x_{12}\right\|^4<\infty.\\
\end{split}
\end{equation}
\end{thm}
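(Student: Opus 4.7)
The plan is to prove the theorem in the standard Bai--Yin style by splitting it into a necessity part (assuming $\lambda_{\max}(\mathbf{Q}_n)\to\xi$ a.s.\ and extracting each of (i)--(iv)) and a sufficiency part (assuming (i)--(iv) and using a truncation/centering/moment argument to control the top eigenvalue). The semicircular law for quaternion matrices of Yin--Bai \cite{yinbai2013} will be invoked to pin down the scale $2\sigma$, and Remark~\ref{a2} will be used to avoid multiplicity pitfalls when passing between the quaternion matrix and its $2n\times 2n$ complex representation.

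For \textbf{necessity}, assume $\lambda_{\max}(\mathbf{Q}_n)\to\xi$ a.s.\ and proceed by principal-submatrix interlacing. First, diagonal entries form $1\times 1$ quaternion principal submatrices with eigenvalues $a_{jj}$, so Cauchy interlacing gives $\max_j a_{jj}^+\le \sqrt{n}\,\lambda_{\max}(\mathbf{Q}_n)$; a standard Borel--Cantelli/iid argument then forces $\mathrm{E}\|x_{11}^+\|^2<\infty$, which is (i). Second, for any pair $j<k$ the $2\times 2$ quaternion principal submatrix has spectral radius at least $\|x_{jk}\|$ (the off-diagonal quaternion entry produces eigenvalues $\pm\|x_{jk}\|$ when the diagonal contribution is subtracted), so using $\lfloor n/2\rfloor$ disjoint pairs and Borel--Cantelli yields $\max_{j<k}\|x_{jk}\|=O(\sqrt{n})$ a.s., hence $\mathrm{E}\|x_{12}\|^4<\infty$, which is (iv). For (ii) and (iii), decompose $\mathbf{H}_n=(\mathbf{H}_n-\mathrm{E}\mathbf{H}_n)+\mathrm{E}\mathbf{H}_n$; the centered piece, divided by $\sqrt n$, has ESD converging to the semicircle of variance $\mathrm{E}\|x_{12}-\mathrm{E}x_{12}\|^2$ by \cite{yinbai2013}, so its top eigenvalue cannot lie below $2\sqrt{\mathrm{E}\|x_{12}-\mathrm{E}x_{12}\|^2}$, forcing $\xi=2\sigma$ once (ii) is established. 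To obtain (ii), one analyzes the mean matrix $\mathrm{E}\mathbf{H}_n$: its $\mathbf{e}$-component is a rank-one-plus-scalar real symmetric matrix whose dominant eigenvalue grows like $n\cdot\mathrm{E}a_{12}$ when $\mathrm{E}a_{12}>0$, while the $\mathbf{i},\mathbf{j},\mathbf{k}$-components produce quaternion self-dual structures with spectral norm of order $n\,|\mathrm{E}b_{12}|$ etc.; any nonzero $\mathrm{E}b_{12},\mathrm{E}c_{12},\mathrm{E}d_{12}$ or positive $\mathrm{E}a_{12}$ would push $\lambda_{\max}(\mathbf{Q}_n)$ to $+\infty$, contradicting a.s.\ convergence to a finite $\xi$.

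For \textbf{sufficiency}, the strategy is the quaternion analogue of Bai--Yin's moment method. First truncate the entries at level $\eta_n\sqrt[4]{n}$ with $\eta_n\downarrow 0$ slowly, where (iv) ensures that the truncated matrix $\widetilde{\mathbf{Q}}_n$ agrees with $\mathbf{Q}_n$ for all large $n$ almost surely. Next, recenter: by (ii) the mean of the truncated entries is $o(n^{-1/2})$ in $\mathbf{i},\mathbf{j},\mathbf{k}$ components and at most $o(n^{-1/2})$ plus a nonpositive real in the $\mathbf{e}$ component, so the recentering perturbation has operator norm that does not increase $\lambda_{\max}$ asymptotically. Then prove the key estimate
\begin{equation*}
\mathrm{E}\,\mathrm{tr}\bigl(\widetilde{\mathbf{Q}}_n-\mathrm{E}\widetilde{\mathbf{Q}}_n\bigr)^{2k_n}\;\le\; C\,n\,(2\sigma)^{2k_n}\bigl(1+o(1)\bigr)
\end{equation*}
for a suitably chosen $k_n\to\infty$ with $k_n/\log n\to\infty$ but $k_n^6/n\to 0$. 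This is carried out by the graph/walk enumeration: expand the trace as a sum over closed walks of length $2k_n$ on $\{1,\dots,n\}$, use non-commutative quaternion moments under the trace (taking the real part via $\mathrm{tr}$ on the $2\times 2$ complex representation, and using the quaternion self-dual structure to pair edges), and show that non-tree walks contribute negligibly thanks to the fourth-moment truncation. Markov's inequality plus Borel--Cantelli then gives $\limsup_n\lambda_{\max}(\mathbf{Q}_n)\le 2\sigma$ a.s., while the matching lower bound follows from the semicircle law \cite{yinbai2013} under (iii).

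The \textbf{main obstacle} is the high-moment trace estimate in the sufficiency part. The difficulty is not conceptual but combinatorial: because quaternions do not commute, the contribution of a closed walk to $\mathrm{tr}(\widetilde{\mathbf{Q}}_n)^{2k_n}$ is a product of quaternion entries taken in a fixed cyclic order, whose expectation depends on the orientation pattern of repeated edges rather than only on their multiplicities as in the real/complex Wigner case. One must classify the walks into canonical (tree-like, double-edge) graphs and show that the quaternion pairings on such walks produce the Catalan-type count that generates the $(2\sigma)^{2k_n}$ factor, while all other graph topologies are killed either by the variance structure of the centered entries or by the fourth-moment truncation threshold. Carrying this bookkeeping out carefully, and reconciling the factor-of-two eigenvalue multiplicity from Remark~\ref{a2} with the trace in the $2n\times 2n$ complex representation, is where the proof's main technical effort lies.
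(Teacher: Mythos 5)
Your overall architecture matches the paper's (truncation plus trace--moment method for sufficiency; test vectors, principal submatrices and Borel--Cantelli for necessity; the quaternion semicircle law for the matching lower bound), but several steps as written do not go through. First, in the sufficiency part you truncate at level $\eta_n n^{1/4}$ and claim that condition (iv) makes the truncated matrix agree with $\mathbf Q_n$ eventually a.s.; this is false. With only ${\rm E}\|x_{12}\|^4<\infty$ one controls the maximum of the $O(n^2)$ off-diagonal entries at scale $\sqrt n$, not $n^{1/4}$ (the latter would require an eighth moment). The correct level is $\eta_n\sqrt n$, as in Lemma~\ref{lem3}, and this is not cosmetic: at level $\eta_n\sqrt n$ an edge of multiplicity $l\ge 3$ can contribute a factor as large as $b(\eta_n\sqrt n)^{l-3}$, and taming these is exactly what forces the $T_1$--$T_4$ edge classification and Lemmas~\ref{tu1}--\ref{tu2}. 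Relatedly, the non-commutativity issue you flag as the ``main obstacle'' is dispatched in the paper by a one-line observation (Lemma~\ref{lem4}): a product of quaternions is a quaternion, and the quaternion norm is multiplicative (being the square root of the determinant of the $2\times2$ representation), so $\left|{\rm tr}\left(x_{j_1j_2}\cdots x_{j_kj_1}\right)\right|\le 2\|x_{j_1j_2}\|\cdots\|x_{j_kj_1}\|$. This reduces the enumeration to products of nonnegative scalars, i.e., to the classical Bai--Yin combinatorics; one does not need to track orientation patterns or extract the exact Catalan count, because only an upper bound is required (the sharp constant $2\sigma$ comes from the semicircle-law lower bound, not from the trace expansion).

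Second, your necessity argument for (iv) is quantitatively insufficient: restricting to $\lfloor n/2\rfloor$ disjoint pairs gives only $n/2$ independent copies of $\|x_{12}\|$, and a maximum of $m$ iid copies being $O(m^{1/2})$ yields ${\rm E}\|x_{12}\|^2<\infty$, not the fourth moment. To obtain ${\rm E}\|x_{12}\|^4<\infty$ you must use all $\sim n^2/2$ above-diagonal entries (legitimate, since they are iid), and you must also control the diagonal terms $a_{jj},a_{kk}$ that enter the top eigenvalue of the chosen $2\times2$ quaternion block; the paper does this by working on dyadic blocks $2^l<n\le 2^{l+1}$, restricting to the (whp at least $2^{l-1}$) indices with $\|x_{jj}\|\le 2^{l/4}$, and conditioning on that set, which produces the exponent $r(r-1)/2\sim 2^{2l}$ that converts summability into the fourth moment. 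Your sketch for (ii) is in the right spirit, but note that ``spectral norm of order $n|{\rm E}b_{12}|$'' is not enough for a Hermitian matrix: one needs the \emph{largest} eigenvalue of the mean matrix to tend to $+\infty$, which the paper establishes by exhibiting explicit eigenvectors of the skew-symmetric sign matrix $\mathbf A_n$ (Lemma~\ref{le:5}) and balancing the competing $a$-term through the choice $k=[n^{1/3}]$; it also first proves ${\rm E}a_{12}\le 0$ and uses the sufficiency part to bound $s_{\min}$ of the centered matrix, steps your outline presupposes but does not supply.
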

According to the symmetry of the largest and smallest eigenvalues of a Hermitian matrix,
we can easily derive the necessary and sufficient conditions for the existence of the limit of the smallest eigenvalue of a quaternion self-dual Hermitian matrix. Then we have the following theorem.
\begin{thm}\label{th2}
Suppose ${\mathbf H_n}=(x_{jk})$, where
$x_{jk}=a_{jk}\mathbf{e}+b_{jk}\mathbf{i}+c_{jk}\mathbf{j}+d_{jk}\mathbf{k}$
is a quaternion self-dual Hermitian matrix whose entries on and above the diagonal are iid quaternion random variables.
Then the largest eigenvalue of ${\mathbf Q_n}=\frac{1}{\sqrt n}{\mathbf H_n}$ tend to $\xi_1$ and the smallest eigenvalue of ${\mathbf Q_n}$ tends to $\xi_2$ almost surely if and only if the following conditions  hold:
\begin{equation}\label{con2}
\begin{split}
({\rm i})\quad &{\rm E}\left\|x_{11}\right\|^2={\rm E}(a_{11})^2<\infty, \\
({\rm ii})\quad &{\rm E}a_{12}={\rm E}b_{12}={\rm E}c_{12}={\rm E}d_{12}=0, \\
({\rm iii})\quad &{\rm E}\left\|x_{12}\right\|^2=\sigma^2, \mbox{ and } \xi_1=2\sigma,\ \xi_2=-2\sigma,\\
({\rm iv})\quad &{\rm E}\left\|x_{12}\right\|^4<\infty.\\
\end{split}
\end{equation}
\end{thm}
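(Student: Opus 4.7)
The plan is to deduce Theorem \ref{th2} as an essentially formal corollary of Theorem \ref{th1} by exploiting the symmetry $\lambda_{\min}(\mathbf{Q}_n) = -\lambda_{\max}(-\mathbf{Q}_n)$. First I would observe that $-\mathbf{H}_n$ is again a quaternion self-dual Hermitian matrix whose entries on and above the diagonal are i.i.d., obtained by replacing $a_{jk},b_{jk},c_{jk},d_{jk}$ by their negatives. In particular the quaternion norm is preserved under negation and the sign of the $\mathbf{e}$-component flips.

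Next I would apply Theorem \ref{th1} twice. Applied to $\mathbf{H}_n$, the almost sure limit $\lambda_{\max}(\mathbf{Q}_n) \to \xi_1$ is equivalent to conditions (i)--(iv) of (\ref{con}), in particular giving $\mathbb{E}(a_{11}^+)^2<\infty$, $\mathbb{E}a_{12}\le 0$, $\mathbb{E}b_{12}=\mathbb{E}c_{12}=\mathbb{E}d_{12}=0$, $\mathbb{E}\|x_{12}-\mathbb{E}x_{12}\|^2=\sigma^2$ with $\xi_1=2\sigma$, and $\mathbb{E}\|x_{12}\|^4<\infty$. Applied to $-\mathbf{H}_n$, the a.s.\ limit $\lambda_{\max}(-\mathbf{Q}_n)=-\xi_2$ (equivalently $\lambda_{\min}(\mathbf{Q}_n)\to\xi_2$) is equivalent to the corresponding conditions with the signs of $a_{jk},b_{jk},c_{jk},d_{jk}$ reversed; this yields $\mathbb{E}((-a_{11})^+)^2<\infty$, $-\mathbb{E}a_{12}\le 0$, the same three zero-mean conditions, the same variance $\sigma^2$ with $-\xi_2=2\sigma$, and the same fourth-moment bound.

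Combining the two sets of conclusions forces the stronger statements in Theorem \ref{th2}: the two one-sided diagonal moment bounds $\mathbb{E}(a_{11}^+)^2<\infty$ and $\mathbb{E}(a_{11}^-)^2<\infty$ merge into $\mathbb{E}a_{11}^2<\infty$, which is exactly (i) of (\ref{con2}) since $\|x_{11}\|=|a_{11}|$; the two one-sided bounds $\mathbb{E}a_{12}\le 0$ and $\mathbb{E}a_{12}\ge 0$ merge into $\mathbb{E}a_{12}=0$, producing (ii); with $\mathbb{E}x_{12}=0$ the centered variance in (\ref{con}) reduces to $\mathbb{E}\|x_{12}\|^2=\sigma^2$, and the two limits become $\xi_1=2\sigma$, $\xi_2=-2\sigma$, giving (iii); the fourth moment condition (iv) is identical in both applications. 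Conversely, the conditions in (\ref{con2}) obviously imply those of (\ref{con}) both for $\mathbf{H}_n$ and for $-\mathbf{H}_n$, so Theorem \ref{th1} supplies both one-sided almost sure limits simultaneously.

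The only thing to verify carefully, and what I expect is the single non-trivial check, is that the hypothesis of Theorem \ref{th1} is genuinely invariant under $\mathbf{H}_n\mapsto -\mathbf{H}_n$: the i.i.d.\ structure of the quaternion entries above the diagonal, their self-dual symmetry, and the real-scalar form of the diagonal entries $x_{jj}=a_{jj}\mathbf{e}$ are all preserved under negation. Once this is noted, the proof of Theorem \ref{th2} is simply the intersection of the two equivalences produced by Theorem \ref{th1}, and no new probabilistic estimate is required.
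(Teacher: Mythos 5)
Your proposal is correct and is precisely the argument the paper intends: the authors dispose of Theorem \ref{th2} with the one-line remark that it follows ``according to the symmetry of the largest and smallest eigenvalues of a Hermitian matrix,'' i.e.\ by applying Theorem \ref{th1} to both $\mathbf{H}_n$ and $-\mathbf{H}_n$ and intersecting the resulting conditions, exactly as you do. Your write-up simply supplies the details (invariance of the hypotheses under negation, merging of the one-sided conditions) that the paper leaves implicit.
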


\section{Sufficiency of Conditions of Theorem \ref{th1}}

Now we are in position to present the proof of the sufficiency of conditions in (\ref{con}). Obviously, we can assume $\sigma=1$ without loss of generality.
To begin with, we shall give some lemmas that will be used. We note that $s_{\max}(\mathbf Q_n)$ and $s_{\min}(\mathbf Q_n)$ denote the largest and the smallest eigenvalues of matrix $\mathbf Q_n$ respectively throughout this paper.
\subsection{Some knowledge in graph theory}
To begin with, we shall introduce some acknowledge of graph theory. A graph is a triple $(E,V,F)$, where $E$ is the set of edges, $V$ is the set of vertices and $F$ is a function, $F:E\mapsto V\times V$. If $F(e)=(v_1,v_2)$, the vertices $v_1,v_2$ are called the ends of edge $e$, $v_1$ is the initial of $e$ and $v_2$ is terminal of $e$. If $v_1=v_2$, edge $e$ is called a loop. If two edges have the same set of ends, they are said to be coincident.

Let ${\mathbf L}=(l_1,\cdots,l_k)$ be a vector valued on $\{1,\cdots,n\}^k$. We define a $\Gamma$-graph as follows. Draw a horizontal line and plot the numbers $l_1,\cdots,l_k$ on it. Consider the distinct numbers as vertices, and draw k edges $e_j$ from $l_j$ to $l_{j+1}$, $j=1,\cdots,k$, where $l_{k+1}=l_1$. Denote the number of distinct $l_j$'s by t. Such a graph is called a $\Gamma(k,t)$-graph. An example of $\Gamma$(8,5)-graph is shown in Figure 1.

\begin{figure}[H]
\centering
\includegraphics[width=0.8\textwidth]{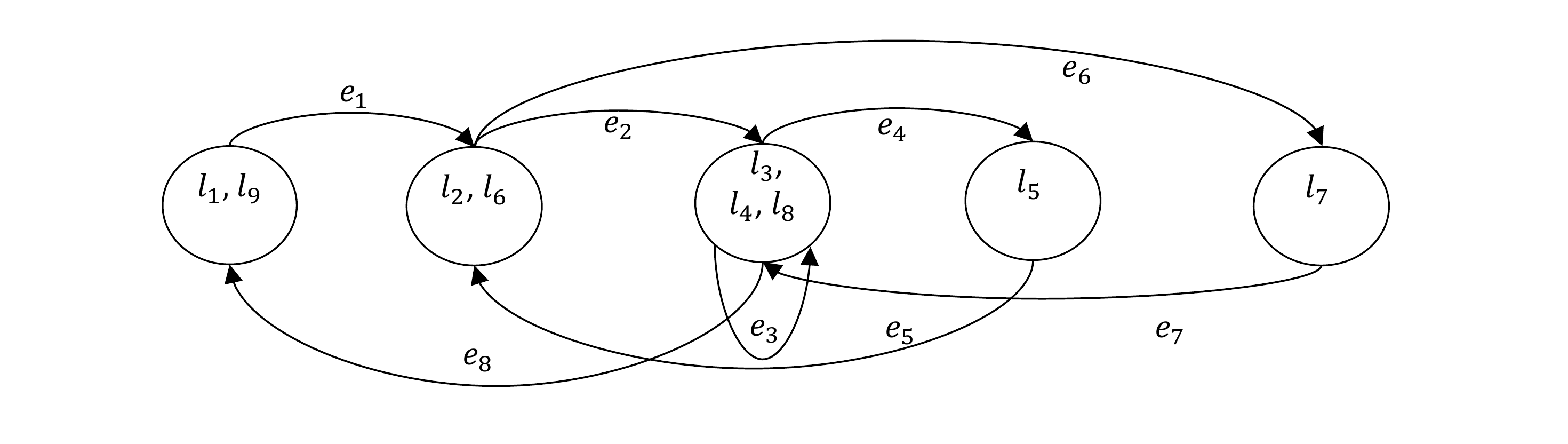}
\caption{A $\Gamma$-graph}
\end{figure}

Two $\Gamma(k,t)$-graphs are said to be isomorphic if one can be converted to the other by a permutation of $(1,\cdots,n)$. By the definition, all $\Gamma(k,t)$-graphs are classified into isomorphism classes.

A $\Gamma(k,t)$-graph is called canonical if it has the following properties:
\begin{enumerate}
    \item It's vertex set is $V=\{1,\cdots,t\}$.
    \item It's edge set is $E=\{e_1,\cdots,e_k\}$.
    \item There is a function $g$ from $\{1,\cdots,k\}$ onto $\{1,\cdots,t\}$ satisfying $g(1)=1$ and $g(l)\leq \max\{g(1),\cdots,g(l-1)\}+1$ for $1<l\leq k$.
    \item $F(e_l)=(g(l),g(l+1))$, for $l=1,\cdots,k$, with convention $g(k+1)=g(1)=1$.
  \end{enumerate}
  It's easy to see that each isomorphism class contains one and only one canonical $\Gamma(k,t)$-graph that is associated with a function $g$, and a general graph in this class can be defined by $F(e_j)=(l_{g(j)},l_{g(j+1)})$. Therefore, we obtain each isomorphism class contains $n(n-1)\cdots(n-t+1)$ $\Gamma(k,t)$-graphs.
  The canonical $\Gamma(k,t)$-graphs can be classified into three categories:
  \begin{description}
    \item[Category 1 (denoted by $\Gamma_1(k,t)$)] A canonical $\Gamma(k,t)$-graph is said to belong to category 1 if each edge is coincident with exactly  one other edge with opposite direction and the graph of noncoincident edges forms a tree (i.e., a connected graph without cycle). Obviously, there is no $\Gamma_1(k,t)$ if $k$ is odd. An example of $\Gamma_1$-graph is shown in Figure 2.
        
\begin{figure}[H]
\centering
\includegraphics[width=0.8\textwidth]{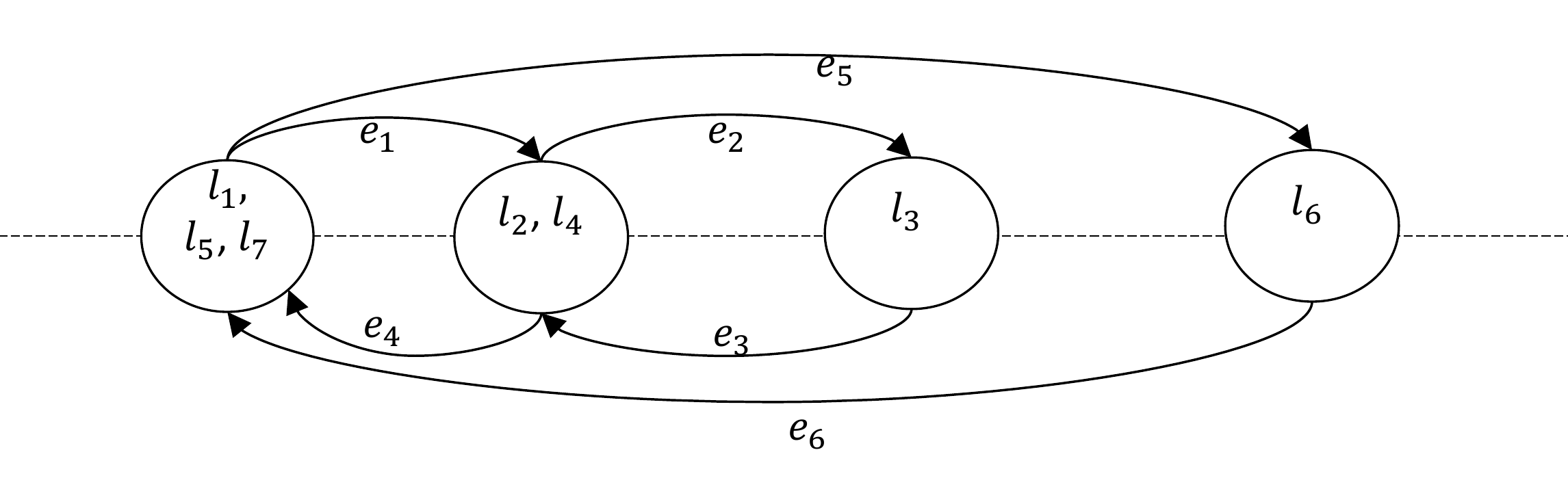}
\caption{A $\Gamma_1$-graph}
\end{figure}

    \item[Category 2 ($\Gamma_2(k,t)$)] This category consists of all those canonical $\Gamma(k,t)$-graphs that have at least one single edge (an edge not coincident with any other edges). An example of $\Gamma_2$-graph is shown in Figure 3.
\begin{figure}[H]
\centering
\includegraphics[width=0.8\textwidth]{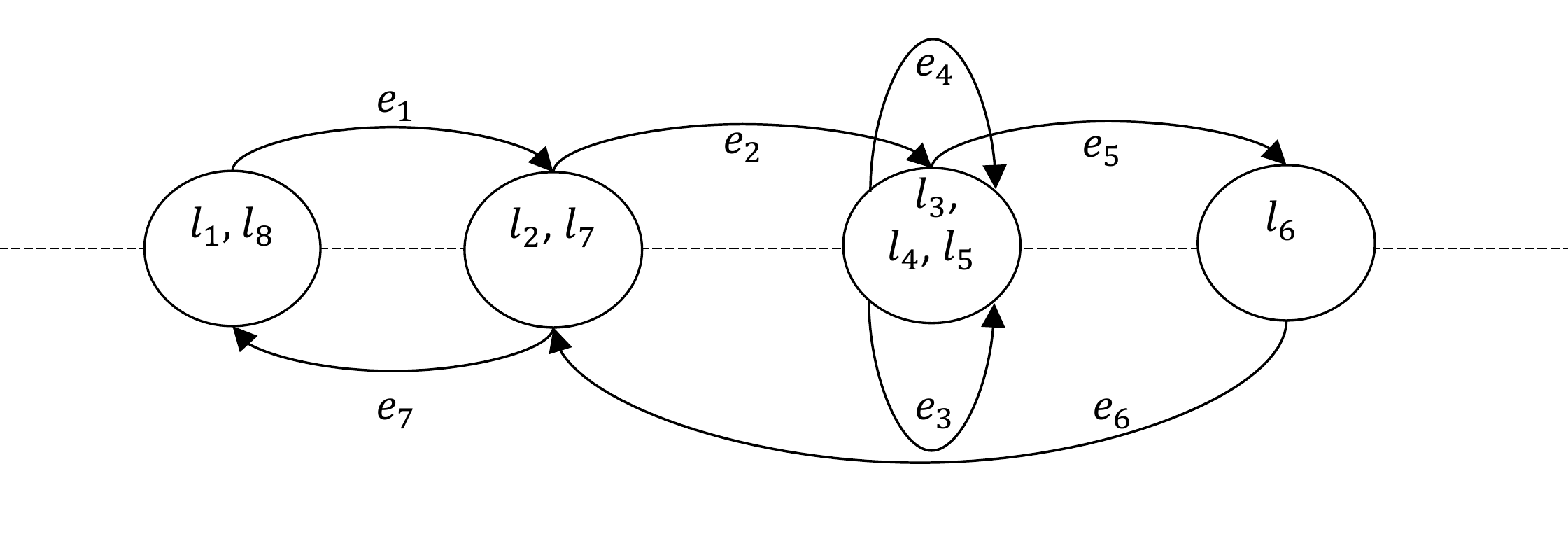}
\caption{A $\Gamma_2$-graph}
\end{figure}

    \item[Category 3 ($\Gamma_3(k,t)$)] This category consists of all other canonical $\Gamma(k,t)$-graphs. Two examples of $\Gamma_3$-graph are shown in Figure 4 (one noncoincident edge has multiplicity 4) and Figure 5 (non-coincident edges form a cycle).
\begin{figure}[H]
\centering
\includegraphics[width=0.8\textwidth]{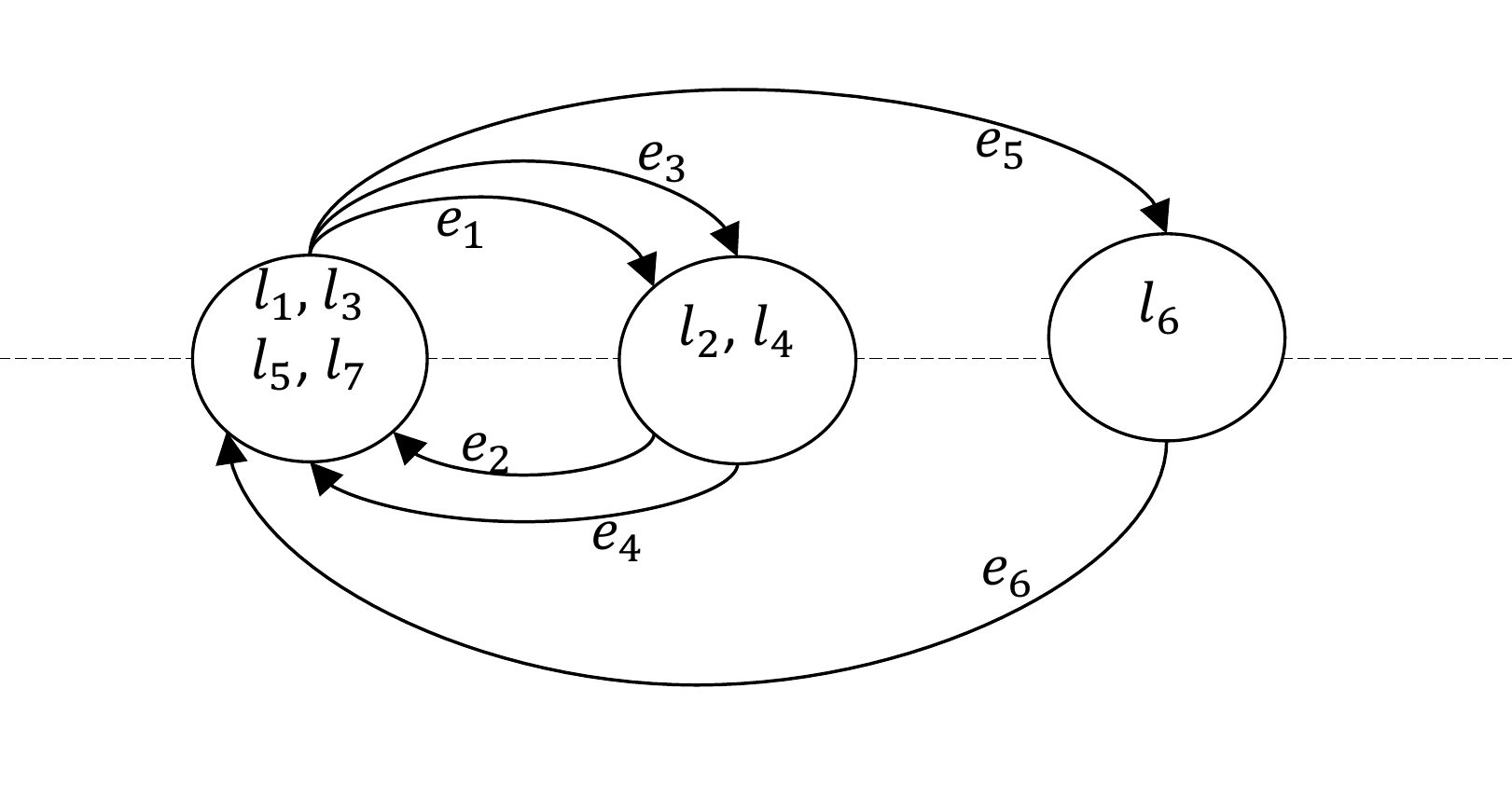}
\caption{A $\Gamma_3$-graph}
\end{figure}  
\begin{figure}[H]
\centering
\includegraphics[width=0.8\textwidth]{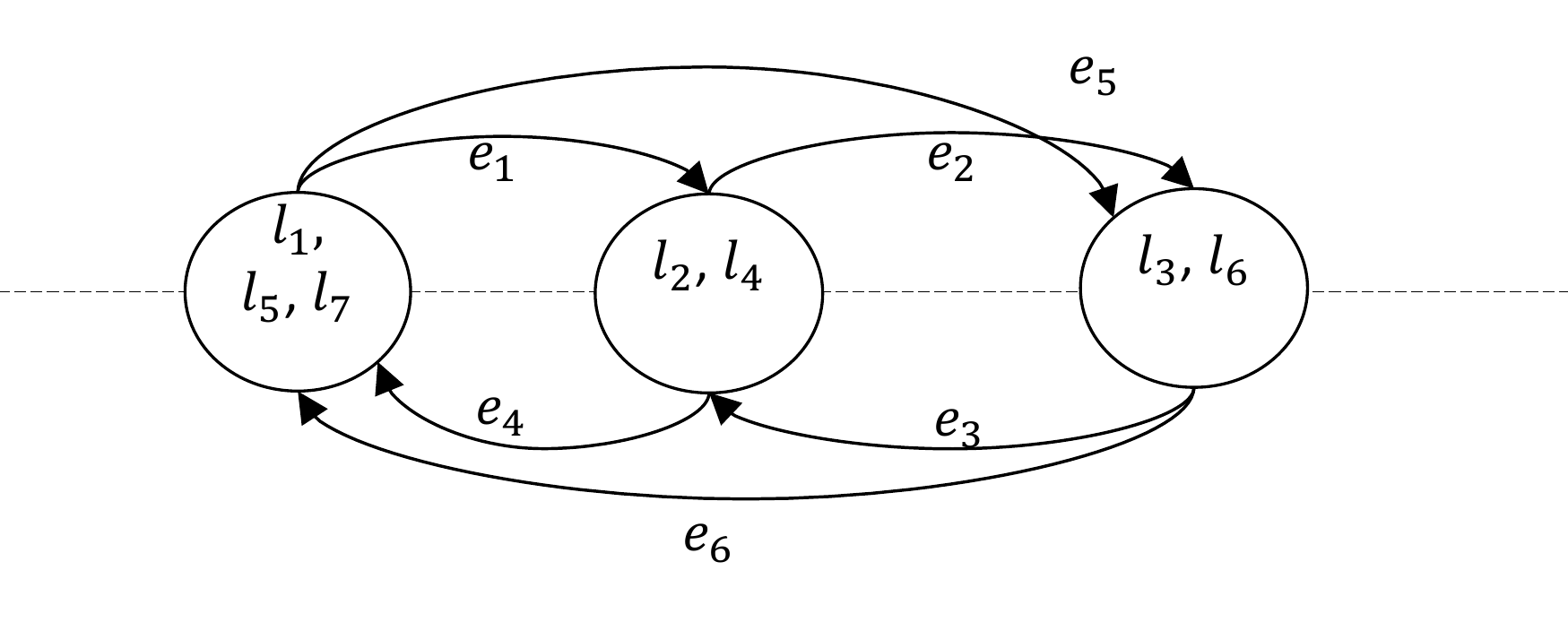}
\caption{A $\Gamma_3$-graph}
\end{figure}
  \end{description}
  Obviously, in a $\Gamma_3(k,t)$-graph, we have $t\leq(k+1)/2$.

  Now, classify the edges of a canonical $\Gamma(k,t)$-graph into several types:
  \begin{enumerate}
    \item If $g(a+1)={\rm max}(g(1),\cdots,g(a))+1$, the edge $e_a=(g(a),g(a+1))$ is called an innovation or a Type-1 $(T_1)$ edge. A $T_1$ edge leads to a new vertex in the path $e_1,\cdots,e_a$.
    \item An edge is called a $T_3$ edge if it coincides with an innovation that is single until the $T_3$ edge appears. A $T_3$ edge $(g(a),g(a+1))$ is said to be irregular if there is only one innovation single up to $a$ (an edge $e$ is said to be single up to $a$ if it doesn't coincide with any other edges in $\{(g(1),g(2)),\cdots,(g(a-1),g(a))\}$). All other $T_3$ edges are called regular $T_3$ edges.
    \item All other edges are called $T_4$ edges.
    \item The first appearance of a  $T_4$ edge is called a $T_2$ edge. There are two cases: the first is the first appearance of a single noninnovation, and the second is that the first appearance of an edge that coincides with a $T_3$ edges.
  \end{enumerate}
  Then, we shall give some lemmas without the proof.
  \begin{lemma}[Lemma 5.5 in \cite{bai2010spectral}]\label{tu1}
  Let $t$ denote the number of $T_2$ edges and $l$ denote the number of innovations in  $\{(g(1),g(2)),\cdots,(g(a-1),g(a))\}$ that are single up to $a$ and have a vertex coincident with $g(a)$. Then $l\leq t+1$.
  \end{lemma}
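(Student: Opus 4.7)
The plan is to prove the bound $l\le t+1$ by induction on the walk index $a$, coupled with a structural analysis of the walk near the current vertex $g(a)$. The base case $a=1$ is immediate, since no edges have been processed and hence $l=t=0$.

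For the inductive step I would first enumerate the successive visits to $g(a)$ as times $1\le a_1<a_2<\cdots<a_m=a$ and, at each visit $i$, classify the entering edge $e_{a_i-1}$ (for $i\ge 2$) and the leaving edge $e_{a_i}$ (for $i\le m-1$) into the four types $T_1$, $T_2$, $T_3$, $T_4$. A single innovation incident to $g(a)$ at time $a$ is of one of two kinds: either the parent innovation $e^{*}$ that first introduced $g(a)$ when $g(a)\ne 1$, or a child innovation created by a $T_1$-leaving from $g(a)$ at some visit $j_s$ that has not yet been cancelled by a matching $T_3$.

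Next I would argue that each single innovation at $g(a)$ beyond the first one forces the appearance of a distinct $T_2$ edge. For each single child innovation created at visit $j_s$, let $F_s$ be the edge effecting the first subsequent return to $g(a)$. Since $F_s$ is incident to $g(a)$, it cannot be $T_1$ (the vertex $g(a)$ already exists), and because the child innovation remains single at time $a$, $F_s$ cannot be a $T_3$ matching any innovation still single at $g(a)$. Hence $F_s$ is either a $T_3$ matching the parent $e^{*}$ (which happens at most once, and only when $e^{*}$ itself is not counted in $l$), or $F_s$ is a $T_4$ edge whose coincidence class must contain a $T_2$ edge, either as the first edge of a non-innovation-headed class or as the third edge of an innovation-headed class whose initial innovation has already been matched.

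The hard part will be distinctness: two different return edges $F_s$ and $F_{s'}$ could a priori share a coincidence class and so point to the same $T_2$. I plan to resolve this by charging each ``extra'' single innovation not to the $T_2$ of its return's own class, but to the most recent previously-uncharged $T_2$ edge in the walk. When two returns share a class, the walk between them must leave $g(a)$ and re-enter via a detour through a different coincidence class, which in turn forces a fresh $T_2$ edge to appear. Combining these charges with the bookkeeping that $l$ equals the number of innovation creations at $g(a)$ minus the number of $T_3$-cancellations incident to $g(a)$ then yields $l-1\le t$, completing the induction.
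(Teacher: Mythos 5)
This lemma is not proved in the paper at all: it is quoted verbatim from \cite{bai2010spectral} (Lemma 5.5 there, going back to Bai and Yin's 1988 argument), and the authors explicitly state that they give it without proof. So your proposal cannot be compared with a proof in the paper and has to stand on its own; as written, it does not.

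Your strategy --- split the $l$ single innovations at $g(a)$ into at most one parent plus single children, and charge each single child to a $T_2$ edge via the first return to $g(a)$ after its creation --- is a reasonable skeleton, but the two load-bearing steps are defective. First, the classification of the return edge $F_s$ is not exhaustive. You rule out $F_s$ being a $T_3$ edge matching an innovation that is \emph{still single at time $a$}, but $F_s$ can be a $T_3$ edge matching a child innovation of $g(a)$ that was single up to the appearance of $F_s$ and is cancelled by $F_s$ itself; such an innovation is not counted in $l$, so nothing in your argument excludes this case. Concretely, in the walk $1,2,3,1,2,4,3,2$ the child $(2,4)$ is single at $a=8$, and its first return to the vertex $2$ is the edge $(3,2)$, a $T_3$ cancelling the other child $(2,3)$ --- neither a $T_4$ edge nor a $T_3$ matching the parent $(1,2)$, so your scheme charges nothing to this single child. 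Second, the distinctness step, which is the actual crux of the lemma, is asserted rather than proved: you claim that when two return edges share a coincidence class the intervening detour ``forces a fresh $T_2$ edge,'' but the detour can consist of repeated $T_4$ edges whose $T_2$ representative occurred much earlier, of $T_3$ edges elsewhere in the graph, and of new innovations. Making this work requires an actual argument --- for instance, tracking the subtree of new vertices grown off the freshly created child and showing that the walk must leave that subtree through an edge whose entire coincidence class is new, hence through a $T_2$ edge not previously charged. Until both points are repaired the inequality $l\le t+1$ does not follow; note also that the induction on $a$ is vestigial, since the inductive hypothesis is never used.
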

  \begin{lemma}[Lemma 5.6 in \cite{bai2010spectral}]\label{tu2}
  The number of regular $T_3$ edges is not greater than twice the number of $T_2$ edges.
  \end{lemma}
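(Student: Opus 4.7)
I would proceed by a step-by-step analysis of the walk $e_1,\dots,e_k$, using Lemma~\ref{tu1} as the principal control on how many single innovations can be incident to the current vertex, and then charging each regular $T_3$ edge back to a preceding $T_2$ edge with multiplicity at most two.

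First, for each time $a$ I would introduce the bookkeeping quantities: $\sigma(a)$ the number of innovations in $\{e_1,\dots,e_{a-1}\}$ that remain single at time $a$; $\ell(a)$ the number of those single innovations incident to $g(a)$; and $\tau(a)$ the number of $T_2$ edges among $e_1,\dots,e_{a-1}$. Lemma~\ref{tu1} immediately gives $\ell(a)\le \tau(a)+1$. Along the walk, $\sigma$ increases by $1$ at each $T_1$ edge, decreases by $1$ at each $T_3$ edge, and is unchanged at each $T_4$ edge. Since every $T_3$ edge matches a single innovation having $g(a)$ as an endpoint, it removes one unit from $\ell(a)$; by definition the $T_3$ is regular exactly when $\sigma(a)\ge 2$.

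The main step is a charging/potential argument. Let $R(a)$ count the regular $T_3$ edges in $\{e_1,\dots,e_{a-1}\}$ and set $\Phi(a):=2\tau(a)-R(a)$. I would show $\Phi(a)\ge 0$ for every $a$ by analyzing how $\Phi$ changes across each edge type: only $T_2$ edges give a gain of $+2$, only regular $T_3$ edges give a loss of $-1$, and all other edges are neutral. The heart of the argument is to establish that between any two consecutive regular $T_3$ events at most two can be charged to the same $T_2$ edge, for otherwise Lemma~\ref{tu1} would be violated at some intermediate step: in a subwalk that contains no $T_2$ edge one has $\ell(a)\le 1$, so only irregular $T_3$ edges can occur there; each $T_2$ edge opens up at most one extra unit of $\ell$, and this extra unit can support at most two subsequent regular $T_3$ removals (one for each side of the resulting bridge in the single-innovation forest) before it is exhausted.

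The main obstacle is making the factor $2$ precise. Concretely, $T_2$ edges come in two flavours (a noninnovation single edge vs.\ a first coincidence with a previous $T_3$), and each must be analysed separately. I expect the noninnovation-single case to be critical: such an edge creates a new cycle in the current single-edge graph, and the two regular $T_3$'s it can potentially support correspond naturally to the two endpoints of the new cycle-closing edge, while the other type of $T_2$ edge can be shown to contribute strictly less. Writing this cleanly requires tracking, for each regular $T_3$, the specific $T_2$ edge whose Lemma~\ref{tu1} slack it consumes, and then checking by case analysis on the forest structure of single innovations that no $T_2$ edge is charged more than twice before being dismantled by further $T_3$ removals.
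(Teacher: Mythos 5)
First, note that the paper does not prove this lemma at all: it is quoted verbatim as Lemma 5.6 of Bai--Silverstein, and the authors explicitly state they give these graph-theoretic lemmas ``without the proof.'' So there is no internal argument to compare yours against; I can only judge your plan against the known proof in the cited source, which is indeed a charging argument built on Lemma~\ref{tu1}. Your general strategy (charge each regular $T_3$ edge to a $T_2$ edge, show the charging is at most $2$-to-$1$) is the right one.

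However, your plan as written has a genuine gap at exactly the point you identify as ``the heart of the argument.'' The claim that ``in a subwalk that contains no $T_2$ edge one has $\ell(a)\le 1$'' is false except for the initial segment before the \emph{first} $T_2$ edge: Lemma~\ref{tu1} bounds $\ell(a)$ by one plus the \emph{total} number of $T_2$ edges among $e_1,\dots,e_{a-1}$, not by the number of $T_2$ edges inside the subwalk under consideration. After $t$ $T_2$ edges have occurred, $\ell(a)$ may be as large as $t+1$ throughout an arbitrarily long $T_2$-free stretch, so regular $T_3$ edges can certainly occur there, and nothing in your potential-function bookkeeping ($\Phi=2\tau-R$) prevents $\Phi$ from going negative in such a stretch. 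The assertion that each $T_2$ edge's ``extra unit of $\ell$'' can support at most two regular $T_3$ removals is precisely the lemma itself in disguise; since $\ell(a)$ is attached to the moving vertex $g(a)$ and is not monotone along the walk, the ``slack'' created by a $T_2$ edge is not a conserved resource that is visibly consumed, and your proposal supplies no mechanism for tracking it. What is actually needed (and what the source does) is to exploit the \emph{proof} of Lemma~\ref{tu1}, which associates to each single innovation incident to $g(a)$ beyond the first a specific, distinct earlier $T_2$ edge; one then maps each regular $T_3$ edge to such an associated $T_2$ edge and verifies combinatorially that each $T_2$ edge is hit at most twice. A smaller point: you slide between ``regular iff $\sigma(a)\ge 2$'' (the paper's loose transcription, counting all single innovations) and the bound on $\ell(a)$ (single innovations incident to $g(a)$); the correct definition, and the one your argument needs, is $\ell(a)\ge 2$.
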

\subsection{Some auxiliary lemmas}

\begin{lemma}\label{lem1}
If condition {\rm(i)} of Theorem \ref{th1} holds, we have $$\limsup \frac{1}{\sqrt n}\max_{k\le n}a_{kk}^+=0.$$
\end{lemma}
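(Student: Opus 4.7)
The diagonal entries of $\mathbf{H}_n$ have the special scalar form $x_{jj}=a_{jj}\mathbf{e}$, so the quaternion norm reduces to $\|x_{jj}\|=|a_{jj}|$, and the positive part $x_{11}^+$ is naturally read as $(a_{11})^+\mathbf{e}$. Condition (i) therefore is the real-valued moment bound ${\rm E}(a_{11}^+)^2<\infty$, and the claim becomes the standard statement that an iid nonnegative sequence $Y_k:=a_{kk}^+$ with finite second moment satisfies $\max_{k\leq n}Y_k=o(\sqrt n)$ a.s.

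I plan to deduce this from a single Borel--Cantelli estimate on the events $\{Y_k>\varepsilon\sqrt k\}$. For any fixed $\varepsilon>0$, since the $Y_k$ are identically distributed,
$$
\sum_{k\geq 1}{\rm P}(Y_k>\varepsilon\sqrt k)=\sum_{k\geq 1}{\rm P}\bigl(Y_1^2>\varepsilon^2 k\bigr)\leq \varepsilon^{-2}{\rm E}Y_1^2<\infty,
$$
where the last inequality uses the standard fact $\sum_{k\geq 1}{\rm P}(Z>k)\leq {\rm E}Z$ for the nonnegative variable $Z=Y_1^2/\varepsilon^2$. Borel--Cantelli then yields $Y_k\leq\varepsilon\sqrt k$ for all sufficiently large $k$, almost surely; applying this along a countable sequence of tolerances $\varepsilon_m\downarrow 0$ gives $Y_k/\sqrt k\to 0$ a.s.

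To convert this pointwise convergence to the statement about the running maximum, fix an $\omega$ in the full-measure set and a tolerance $\varepsilon>0$, and choose $K=K(\omega,\varepsilon)$ so that $Y_k(\omega)\leq\varepsilon\sqrt k\leq\varepsilon\sqrt n$ for all $K\leq k\leq n$. Then
$$
\frac{1}{\sqrt n}\max_{k\leq n}Y_k(\omega)\leq \frac{\max_{k\leq K}Y_k(\omega)}{\sqrt n}+\varepsilon,
$$
and the first term tends to $0$ as $n\to\infty$ since $\max_{k\leq K}Y_k(\omega)$ is finite. Letting $\varepsilon\downarrow 0$ finishes the proof.

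There is no genuine obstacle here --- the entire argument is one Borel--Cantelli application followed by splitting the maximum into its initial block and its tail. The only point worth flagging is the initial identification of $\|x_{11}^+\|^2$ with $(a_{11}^+)^2$, which uses the scalar shape of the diagonal quaternion entries; after that the lemma is the familiar ``$L^2$-moment implies $o(\sqrt n)$ maximum'' fact for iid sequences.
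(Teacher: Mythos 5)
Your proof is correct and follows essentially the same route as the paper: a Borel--Cantelli argument showing $a_{kk}^+/\sqrt{k}\to 0$ a.s., followed by the standard passage from pointwise decay to decay of the running maximum. You simply spell out the details (the identification $\|x_{11}^+\|=a_{11}^+$, the tail-sum bound, and the initial-block/tail split) that the paper leaves implicit.
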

\begin{proof} By Borel-Cantelli Lemma, it follows that
\begin{align*}
{\rm E}(a_{11}^+)^2<\infty
\Rightarrow &\sum_{n}{\rm P}\left((a_{11}^+)^2>\varepsilon n\right) <\infty \Rightarrow \frac{(a_{nn}^+)^2}{n}\to 0 ,\mbox{a.s.}\end{align*}
Thus, $\frac{1}{\sqrt n}\max\limits_{k\le n}a_{kk}^+\to 0,\mbox{a.s.}$.
\end{proof}

\begin{lemma}\label{lem2}
Denote $\widetilde {\mathbf Q}_n=\frac{1}{\sqrt n}\left(\widetilde x_{jk}\right)$, where
\begin{align*}
\widetilde x_{jk}=\left\{\begin{array}{cc}
\left(
                      \begin{array}{cc}
                        0 & 0 \\
                        0 & 0 \\
                      \end{array}
                    \right)
&j=k\\
x_{jk}-{\rm E}x_{jk}&j\neq k
\end{array}\right..
\end{align*}
Then if the conditions  (\ref{con}) hold, then we have $\limsup s_{\max}\left(\mathbf Q_n\right)\le \limsup s_{\max}(\widetilde{\mathbf Q}_n)$.
\end{lemma}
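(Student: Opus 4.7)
The plan is to decompose $\mathbf Q_n - \widetilde{\mathbf Q}_n$ into two explicit perturbation matrices whose largest eigenvalues both tend to zero almost surely, and then conclude by Weyl's inequality $s_{\max}(\mathbf Q_n) \le s_{\max}(\widetilde{\mathbf Q}_n) + s_{\max}(\mathbf Q_n - \widetilde{\mathbf Q}_n)$. Using condition (ii) of (\ref{con}), one has $\mathrm E x_{12} = (\mathrm E a_{12})\mathbf{e}$, so in the $2n\times 2n$ complex representation every off-diagonal $2\times 2$ block of $\mathbf Q_n-\widetilde{\mathbf Q}_n$ equals $(\mathrm E a_{12}/\sqrt n)\mathbf I_2$, while the diagonal blocks are $(a_{jj}/\sqrt n)\mathbf I_2$. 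I would therefore write $\mathbf Q_n-\widetilde{\mathbf Q}_n=\mathbf A_n+\mathbf B_n$ with
\[
\mathbf A_n := \tfrac{1}{\sqrt n}\,\mathrm{diag}(a_{11},\ldots,a_{nn})\otimes \mathbf I_2,\qquad
\mathbf B_n := \tfrac{\mathrm E a_{12}}{\sqrt n}(\mathbf J_n-\mathbf I_n)\otimes \mathbf I_2,
\]
where $\mathbf J_n$ is the $n\times n$ all-ones matrix, and apply Weyl once more: $s_{\max}(\mathbf Q_n-\widetilde{\mathbf Q}_n)\le s_{\max}(\mathbf A_n)+s_{\max}(\mathbf B_n)$.

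The control of the two summands is then straightforward. The eigenvalues of $\mathbf A_n$ are $a_{jj}/\sqrt n$, each with multiplicity two, so $s_{\max}(\mathbf A_n)\le \max_j a_{jj}^+/\sqrt n$, which tends to $0$ almost surely by Lemma \ref{lem1}. For $\mathbf B_n$, the eigenvalues of $\mathbf J_n - \mathbf I_n$ are $n-1$ (simple) and $-1$ (with multiplicity $n-1$), and each of these gets doubled by the Kronecker factor $\mathbf I_2$; since $\mathrm E a_{12}\le 0$, the top eigenvalue of $\mathbf B_n$ is $-\mathrm E a_{12}/\sqrt n$, which is $O(1/\sqrt n)$.

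The only substantive point, and the one I expect to be the conceptual obstacle, is recognising that the one-sided assumption $\mathrm E a_{12}\le 0$ built into condition (ii) is really doing work here: without it, the other eigenvalue $(n-1)\mathrm E a_{12}/\sqrt n$ of $\mathbf B_n$ would be of order $\sqrt n$ and the perturbation argument would collapse. Exactly this asymmetry between the largest and the smallest eigenvalues explains why Theorem \ref{th1} can only treat the top edge, and why Theorem \ref{th2} requires the stronger centering $\mathrm E a_{12}=0$. Once this sign observation is in place, the proof reduces to a routine combination of Weyl's inequality, a tensor-product eigenvalue computation, and Lemma \ref{lem1}.
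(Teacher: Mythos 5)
Your proof is correct and is essentially the paper's argument in matrix form: the paper splits the Rayleigh quotient $\mathbf z^*\mathbf Q_n\mathbf z$ into the centered off-diagonal part, the mean part $\frac{\mu}{\sqrt n}\sum_{j,k}z_j^*z_k=\frac{\mu}{\sqrt n}\bigl\|\sum_j z_j\bigr\|^2\le 0$ (discarded using $\mu=\mathrm{E}a_{12}\le 0$), and a diagonal part controlled by Lemma \ref{lem1}, which is exactly your Weyl decomposition into $\widetilde{\mathbf Q}_n+\mathbf A_n+\mathbf B_n$ with the spectrum of $(\mathbf J_n-\mathbf I_n)\otimes\mathbf I_2$ computed explicitly. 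Your observation about the role of the one-sided condition $\mathrm{E}a_{12}\le 0$ is precisely the point the paper's inequality chain exploits.
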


\begin{proof} By condition ({\rm ii}) and let $\mu={\rm E}a_{12}$, applying Lemma \ref{lem1}, one has
\begin{align*}
&s_{\max}\left(\mathbf Q_n\right)=\max_{\left\|\mathbf z\right\|_{E}=1}\mathbf z^*\mathbf Q_n \mathbf z=\frac{1}{\sqrt n}\max_{\left\|\mathbf z\right\|_{E}=1}\left(\sum_{jk}z_j^*x_{jk}z_k\right)\\
=&\max_{\left\|\mathbf z\right\|_{E}=1}\left[\frac{1}{\sqrt n}\sum_{j\neq k}z_j^*\left(x_{jk}-{\rm E}x_{jk}\right)z_k+\frac{\mu}{\sqrt n}\sum_{j,k}z_j^*z_k+\frac{1}{\sqrt n}\sum_j\left(a_{jj}-\mu\right)\left\|z_j\right\|_E^2\right]\\
\le&\max_{\left\|\mathbf z\right\|_{E}=1}\left[\frac{1}{\sqrt n}\sum_{j\neq k}z_j^*\left(x_{jk}-{\rm E}x_{jk}\right)z_k+\frac{1}{\sqrt n}\sum_j\left(a_{jj}^+-\mu\right)\left\|z_j\right\|_E^2\right]\\
\le&\max_{\left\|\mathbf z\right\|_{E}=1}\left[\frac{1}{\sqrt n}\sum_{j\neq k}z_j^*\left(x_{jk}-{\rm E}x_{jk}\right)z_k+\frac{1}{\sqrt n}\max_j\left(a_{jj}^+-\mu\right)\right]\\
\le&s_{\max}\left(\widetilde{\mathbf Q}_n\right)+o_{\rm a.s.}(1)
\end{align*}
where \ $\mathbf z=\left(z_{1}^{\prime},z_{2}^{\prime},\cdots,z_{n}^{\prime}\right)^{\prime}, z_j$ is a $2 \times 1$ vector for $j=1,\cdots,n$ and  $\left\|\cdot\right\|_E$ denotes the Euclidean norm.
\end{proof}

\begin{lemma}\label{lem3}
If condition {\rm(iv)} of Theorem \ref{th1} holds, we can select a sequence of constants \ $\eta_n \downarrow 0$ satisfying $${\rm P}(\mathbf{\widetilde{Q}_n} \neq \mathbf{\widehat{Q}_n}, \ i.o.)=0,$$
where $\mathbf{\widehat{Q}_n}=\frac{1}{\sqrt n}\left( x_{jk}(1-\delta_{jk})I(\|x_{jk}\|\leq \eta_n \sqrt n)\right)$, and $\delta_{jk}$ is the Kronecker delta.
And the speed of $\eta_n \downarrow 0$ can be made arbitrarily slow.
\end{lemma}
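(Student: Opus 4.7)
My plan is a two-layer Borel--Cantelli argument along the geometric subsequence $n_j=2^j$, followed by a diagonal construction of $\eta_n$. The content of the lemma is that one can choose a deterministic $\eta_n\downarrow 0$ so that, almost surely, for all but finitely many $n$, every off-diagonal entry $\|x_{jk}\|$ with $j,k\le n$ satisfies $\|x_{jk}\|\le\eta_n\sqrt n$; once this holds, the indicator appearing in $\widehat{\mathbf Q}_n$ is identically $1$ on the relevant entries, so the truncation has no effect and $\widehat{\mathbf Q}_n$ agrees with $\widetilde{\mathbf Q}_n$.

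Write $M_n=\max_{1\le l<k\le n}\|x_{lk}\|$, which is nondecreasing in $n$. For any fixed $c>0$, a union bound gives
\[
P\bigl(M_{n_j}>c\sqrt{n_j}\bigr)\le n_j^2\,P\bigl(\|x_{12}\|>c\,2^{j/2}\bigr)=4^j\,P\bigl(\|x_{12}\|>c\,2^{j/2}\bigr).
\]
A Fubini/layer-cake computation (using $\sum_{j\le J}4^j\le \tfrac{4}{3}\,4^J$ with $J=\log_2((t/c)^2)$ inside $\int_0^\infty dF(t)$, where $F$ is the distribution of $\|x_{12}\|$) yields $\sum_{j}4^j\,P(\|x_{12}\|>c\,2^{j/2})\le\tfrac{4}{3c^4}\,E\|x_{12}\|^4<\infty$ by hypothesis (iv). A standard diagonal argument then produces a deterministic $\epsilon_j\downarrow 0$: for each $m\ge 1$ pick $J_m$ so that $\sum_{j\ge J_m}P(M_{n_j}>(1/m)\sqrt{n_j})<2^{-m}$, arrange $J_1<J_2<\cdots\to\infty$, and set $\epsilon_j=1/m$ on $J_m\le j<J_{m+1}$. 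Borel--Cantelli then yields $M_{n_j}\le\epsilon_j\sqrt{n_j}$ for all large $j$ almost surely.

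To transfer the bound from the subsequence to all $n$, I use the monotonicity of $M_n$: for $n_j\le n<n_{j+1}$,
\[
M_n\le M_{n_{j+1}}\le\epsilon_{j+1}\sqrt{n_{j+1}}\le\sqrt 2\,\epsilon_{j+1}\sqrt n,
\]
so defining $\eta_n=\sqrt 2\,\epsilon_{j+1}$ on each such dyadic block gives a deterministic $\eta_n\downarrow 0$ with $M_n\le\eta_n\sqrt n$ for all large $n$ almost surely, which is exactly the claim. The possibility of arbitrarily slow decay of $\eta_n$ is automatic: enlarging each $J_m$ only strengthens the tail estimate and lengthens the constancy block of $\eta_n$, so the rate may be dragged out as slowly as we wish.

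The main obstacle is controlling the union-bound series. A direct Borel--Cantelli over every $n$ would demand $\sum_n n^2\,P(\|x_{12}\|>c\sqrt n)<\infty$, equivalent to $E\|x_{12}\|^6<\infty$ and strictly stronger than the fourth-moment hypothesis (iv). Passing to the geometric subsequence $n_j=2^j$ thins the $n^2$ counting factor to $4^j$, which is precisely the level at which the layer-cake bound identifies the resulting series with $E\|x_{12}\|^4$; the monotonicity of $M_n$ then absorbs the interpolation with only a harmless factor $\sqrt 2$. The remainder of the argument is the standard diagonal/Borel--Cantelli packaging.
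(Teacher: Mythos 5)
Your proof is correct. The paper itself omits the argument, merely citing the Truncation Lemma (Lemma 3.1) of Bai and Yin (1988), and your dyadic-subsequence union bound, layer-cake identification of $\sum_j 4^j\,{\rm P}(\|x_{12}\|>c\,2^{j/2})$ with the fourth moment, diagonal choice of $\epsilon_j$, and monotone interpolation of $M_n$ across dyadic blocks is precisely the standard proof of that cited lemma, adapted correctly to the quaternion norm.
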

\begin{proof}
Note that Lemma \ref{lem3} can be viewed as a generalization of Lemma 3.1 (Truncation Lemma) in \cite{bai1988necessary}, we shall omit the proof for brevity.
\end{proof}

By Lemma \ref{lem3} and the fact that for any Hermitian matrices $A$ and $B$, $s_{\max}(A+B)\leq s_{\max}(A)+s_{\max}(B)$.
Note that by the selection of $\eta_n$, we have $s_{\max}({\rm E}(\mathbf{\widehat{Q}_n}))\to 0$.
We need only investigate the upper limit of $s_{\max}(\mathbf{\widehat{Q}_n}-{\rm E}(\mathbf{\widehat{Q}_n}))$. Combining Lemma \ref{lem2} and Lemma \ref{lem3}, we shall prove the sufficiency of Theorem \ref{th1} under the following four assumptions:

\begin{equation}
\begin{split}
({\rm i})\quad &x_{jj}=\left(
                         \begin{array}{cc}
                           0 & 0 \\
                           0 & 0 \\
                         \end{array}
                       \right).\\ \notag
({\rm ii})\quad &{\rm E}a_{jk}={\rm E}b_{jk}={\rm E}c_{jk}={\rm E}d_{jk}=0,\sigma_n^2={\rm E}\left\|x_{jk}\right\|^2\le 1 \ {\rm for}\ j\neq k. \\\notag
({\rm iii})\quad &\left\|x_{jk}\right\|\le\eta_n\sqrt n \ \ {\rm for} \ j\neq k.\\\notag
({\rm iv})\quad &{\rm E}\left\|x_{jk}^l\right\|\le b\left(\eta_n\sqrt n\right)^{l-3} \ {\rm \ for \ some \ constant} \ b>0 \ {\rm \ and \ all} \ j\neq k,l\ge3.\notag
\end{split}
\end{equation}

\subsection{The proof of sufficiency of Theorem \ref{th1}}

Due to the Theorem {\bf 1.1} of \cite{yinbai2013}, we have
\begin{align*}
\liminf_{n\to \infty}s_{\max}\left(\mathbf Q_n\right)\ge 2,{\rm a.s..}
\end{align*}
Thus, it is sufficient to show that
\begin{align*}
\limsup_{n\to \infty}s_{\max}\left(\mathbf Q_n\right)\le 2,{\rm a.s..}
\end{align*}
For any even integer k and real number $\eta>2$, we have
\begin{align}\label{al:1}
{\rm P}\left(s_{\max}\left(\mathbf Q_n\right)\ge\eta\right)\le{\rm P}\left({\rm tr}\left[\left(\mathbf Q_n\right)^k\right]\ge\eta^k\right)\le\eta^{-k}{\rm E}\left({\rm tr}\left(\mathbf Q_n\right)^k\right).
\end{align}
To complete the proof, we shall select a sequence of $k_n=2m$ with the properties $k/\log \left(n\right)\to \infty$ and $k\eta_n^{1/3}/\log  n\to 0$, and show that the right-hand side of (\ref{al:1}) is summable. To this end,
we will devote to estimate
\begin{align}\label{al:4}
{\rm E}\left({\rm tr}\left(\mathbf Q_n\right)^k\right)=&n^{-k/2}\sum_{j_1,\cdots,j_k}{\rm Etr}\left(x_{j_1j_2}x_{j_2j_3}\cdots x_{j_kj_1}\right)\notag\\
=&n^{-k/2}\sum_{G}\sum_{\mathbf L}{\rm Etr}\left(x_G(\mathbf L)\right),
\end{align}
where the graphs G are \ $\Gamma(k,t)$-graphs defined in Subsection {3.1}. Note that if the graph G has a single edge, then the corresponding term is  zero. So we only need to consider $\Gamma_1$ and $\Gamma_3$-graphs.
Adopting the definition of types of edges in Subsection {3.1}, (\ref{al:4}) can be written as
\begin{align}\label{al:8}
{\rm E}\left({\rm tr}\left(\mathbf Q_n\right)^k\right)=n^{-k/2}{\sum}^{\prime}{\sum}^{\prime\prime}{\sum}^{\prime\prime\prime}{\rm Etr}\left(x_{j_1j_2}x_{j_2j_3}\cdots x_{j_kj_1}\right)
\end{align}
where ${\sum}^{\prime}$ is the summation for different arrangement of $T_1,T_3,T_4$-types edges,
${\sum}^{\prime\prime}$ is the summation for canonical graphs ($\Gamma_1,\Gamma_3$) of the given arrangement of edges and ${\sum}^{\prime\prime\prime}$ is the summation for isomorphic graphs of the given canonical graph.
Before estimating the righthand side of (\ref{al:8}), we establish an inequality:
\begin{lemma}\label{lem4}
For $\forall \  j_1,j_2,\cdots,j_k$,
\begin{align*}
\left|{\rm tr}\left(x_{j_1j_2}x_{j_2j_3}\cdots x_{j_kj_1}\right)\right|\le 2\left\| x_{j_1j_2}\right\|\left\|x_{j_2j_3}\right\|\cdots\left\| x_{j_kj_1}\right\|.
\end{align*}
\end{lemma}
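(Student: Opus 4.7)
The plan is to exploit the very special $2\times 2$ block structure of the complex representation of quaternions. Specifically, the set of matrices of the form $\begin{pmatrix}\lambda & \omega\\ -\bar\omega & \bar\lambda\end{pmatrix}$ is closed under multiplication (this is exactly the statement that the quaternions form a ring, and it can be verified by direct computation from the definitions of $\mathbf e,\mathbf i,\mathbf j,\mathbf k$ given earlier). Consequently the product $P:=x_{j_1j_2}x_{j_2j_3}\cdots x_{j_kj_1}$ is itself a $2\times 2$ matrix of quaternion form, say $P=\begin{pmatrix}\Lambda & \Omega\\ -\bar\Omega & \bar\Lambda\end{pmatrix}$.

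From this the bound is immediate in two short steps. First, by Remark~\ref{re:1}, $\|q\|^2=\det q$ for any quaternion $q$; hence by multiplicativity of the determinant
\[
\det P=\prod_{i=1}^{k}\det x_{j_ij_{i+1}}=\prod_{i=1}^{k}\|x_{j_ij_{i+1}}\|^{2},
\]
with the convention $j_{k+1}=j_1$. Second, since $P$ has quaternion form,
\[
|\Lambda|^{2}\le |\Lambda|^{2}+|\Omega|^{2}=\det P=\prod_{i=1}^{k}\|x_{j_ij_{i+1}}\|^{2},
\]
so $|\Lambda|\le\prod_i\|x_{j_ij_{i+1}}\|$.

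Finally, the trace of $P$ is $\Lambda+\bar\Lambda=2\operatorname{Re}\Lambda$, hence
\[
\bigl|\mathrm{tr}(x_{j_1j_2}x_{j_2j_3}\cdots x_{j_kj_1})\bigr|=|2\operatorname{Re}\Lambda|\le 2|\Lambda|\le 2\prod_{i=1}^{k}\|x_{j_ij_{i+1}}\|,
\]
which is exactly the desired inequality. There is no real obstacle here; the only point that must not be skipped is verifying (or citing) that the product of $2\times 2$ quaternion-form matrices is again of quaternion form, since without this the entire argument collapses — but this is a classical and elementary computation built into the very definition of the matrix representation introduced at the start of the paper.
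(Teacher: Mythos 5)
Your proposal is correct and follows essentially the same route as the paper: both arguments rest on the closure of quaternion-form $2\times 2$ matrices under multiplication, the identity $\det q = \|q\|^2$ together with multiplicativity of the determinant, and the bound $|{\rm tr}\,P| = |\Lambda + \bar\Lambda| \le 2\left(|\Lambda|^2 + |\Omega|^2\right)^{1/2} = 2\left(\det P\right)^{1/2}$. The only difference is presentational — you make explicit the closure step that the paper simply asserts — so there is nothing further to add.
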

\begin{proof}
Noticing that $x_{j_1j_2}x_{j_2j_3}\cdots x_{j_kj_1}$ can be written as $\left(\begin{array}{cc}
\alpha&\beta\\
-\bar\beta&\bar \alpha
\end{array}\right)$, and according to Remark \ref{re:1}, we have
\begin{align*}
&\left|{\rm tr}\left(x_{j_1j_2}x_{j_2j_3}\cdots x_{j_kj_1}\right)\right|=\left|\alpha+\bar\alpha\right|\le2\left(\left|\alpha\right|^2+\left|\beta\right|^2\right)^{1/2}\\
=&2\left\{\det{\left(\begin{array}{cc}
\alpha&\beta\\
-\bar\beta&\bar \alpha
\end{array}\right)}\right\}^{1/2}\\
=&2\left\{\det\left(x_{j_1j_2}\right)\det\left(x_{j_2j_3}\right)\cdots\det\left( x_{j_kj_1}\right)\right\}^{1/2}\\
= &2\left\| x_{j_1j_2}\right\|\left\|x_{j_2j_3}\right\|\cdots\left\| x_{j_kj_1}\right\|.
\end{align*}
\end{proof}
Applying Lemma \ref{lem4} to (\ref{al:8}), we have
\begin{align}\label{al:9}
{\rm E}\left({\rm tr}\left(\mathbf Q_n\right)^k\right)\le 2n^{-k/2}{\sum}^{\prime}{\sum}^{\prime\prime}{\sum}^{\prime\prime\prime}{\rm E}\left\|x_{j_1j_2}\right\|\left\|x_{j_2j_3}\right\|\cdots\left\| x_{j_kj_1}\right\|.
\end{align}
For $\Gamma_1$ and $\Gamma_3$-graphs, suppose that there are $l\ (l\le m)$ innovations and $t$ $T_2$ edges in the graph G. Then there are $l$ $T_3$ edges, $k-2l$ $T_4$ edges and $l+1$ noncoincident vertices.
We obtain that ${\sum}^{\prime}\leq\sum_{l=1}^{k/2}{k\choose l}{k-l\choose l}$. By Lemmas \ref{tu1} and \ref{tu2},
we know that ${\sum}^{\prime\prime}\leq\sum_{t=0}^{k-2l}\left(t+1\right)^{2(k-2l)}
{k^2\choose t}\left(t+1\right)^{k-2l}$. Obviously, ${\sum}^{\prime\prime\prime}$ is bounded by $n^{l+1}$. Then, together with (\ref{al:9}), we have
\begin{footnotesize}
\begin{align*}
{\rm E}\left({\rm tr}\left(\mathbf Q_n\right)^k\right)\le &2n^{-k/2}\sum_{l=1}^{[k/2]}\sum_{t=0}^{k-2l}n^{l+1}\bigg(\begin{array}{cc}k\\l\end{array}\bigg)\bigg(\begin{array}{cc}k-l\\l\end{array}\bigg)\bigg(\begin{array}{cc}k^2\\t\end{array}\bigg)
\left(t+1\right)^{3(k-2l)}b^t\left(\sqrt n \eta_n\right)^{k-2l-t}\\
\le&2n\sum_{l=1}^{[k/2]}\frac{k!}{l!l!(k-2l)!}\sum_{t=0}^{k-2l}k^{2t}
\left(t+1\right)^{3(k-2l)}b^t\eta_n^{k-2l}\left(\sqrt n\eta_n\right)^{-t}\\
=&2n\sum_{l=1}^{[k/2]}\frac{k!}{l!l!(k-2l)!}\sum_{t=0}^{k-2l}
\left(t+1\right)^{3(k-2l)}\left[\frac{bk^2}{\sqrt n \eta_n}\right]^{t+1}\eta_n^{k-2l}\frac{\sqrt n \eta_n}{bk^2}\\
= &\frac{2\eta_nn^{3/2}}{bk^2}\sum_{l=1}^{[k/2]}\frac{k!}{l!l!(k-2l)!}\sum_{t=0}^{k-2l}
\left(t+1\right)^{3(k-2l)}\left[\frac{bk^2}{\sqrt n \eta_n}\right]^{t+1}\eta_n^{k-2l}.
\end{align*}
\end{footnotesize}
Due to the elementary inequality
\begin{align*}
\alpha^{-(t+1)}\left(t+1\right)^{\beta}\le\left(\frac{\beta}{\log (\alpha)}\right)^{\beta}
\end{align*}
where $\beta>0,\alpha>1$, for large $n$ we have
\begin{align}\label{al:10}
{\rm E}\left({\rm tr}\left(\mathbf Q_n\right)^k\right)\le& \frac{2\eta_nn^{3/2}}{bk^2}\sum_{l=1}^{k/2}\frac{k!}{l!l!(k-2l)!}\sum_{t=0}^{k-2l}\eta_n^{k-2l}\left(\frac{3(k-2l)}{\log (\sqrt n \eta_n/bk^{2})}\right)^{3(k-2l)}\notag\\
\le&2 n^{3/2}\sum_{l=1}^{k/2}\frac{k!}{l!l!(k-2l)!}\eta_n^{k-2l}\left(\frac{10k}{\log  n}\right)^{3(k-2l)}\notag \\
\le&2n^{3/2}\left[1+1+\left(\frac{10k\eta_n^{1/3}}{\log  n}\right)^3\right]^k\notag\\
=&2n^{3/2}\left[2+o(1)\right]^k
\end{align}
where the last inequality follows from $k\eta_n^{1/3}/\log  n\to 0$ . Finally, together with (\ref{al:1}) and (\ref{al:10}), we have
\begin{align*}
\sum_n{\rm P}\left(s_{\max}\left(\mathbf Q_n\right)\ge\eta\right)\le &2\sum_n\frac{n^{3/2}\left[2+o(1)\right]^k}{\eta^k}\\
\le &2\sum_n n^{3/2}e^{k\log{\frac{2+o(1)}{\eta}}}\\
= &\sum_n \left(2n\right)^{3/2+{\frac{k}{\log \left(2n\right)}\log{\frac{2+o(1)}{\eta}}}}<\infty
\end{align*}
where the last inequality follows from the fact that $k/\log n\to \infty$. The sufficiency is proved.

\section{Necessity of Conditions of Theorem \ref{th1}}

\subsection{Necessity of condition (i)}
Suppose that $\limsup s_{\max}\left(\mathbf Q_n\right)\le \xi ,{\rm a.s.}$. Then,
\begin{align*}
s_{\max}\left(\mathbf Q_n\right)=&\max_{\left\|\mathbf z\right\|_{E}=1}\mathbf z^*\mathbf Q_n\mathbf z
\ge \frac{1}{\sqrt n}\left(0,\cdots,0,1\right)\mathbf Q_n\left(0,\cdots,0,1\right)^{\prime}
=\frac{a_{nn}}{\sqrt n}
\end{align*}
which implies that
$\frac{a_{nn}^+}{\sqrt n}\le\max\left\{0,s_{\max}\left(\mathbf Q_n\right)\right\}.$\\
Applying Borel-Cantelli lemma, for any $\eta>\xi$, we have
\begin{align*}
\limsup\frac{a_{nn}^+}{\sqrt n}<\eta,{\rm a.s.}\Rightarrow{\rm P}\left(\frac{a_{nn}^+}{\sqrt n}\ge\eta,{\rm i.o.}\right)=0
\Rightarrow\sum_{n=1}^{\infty}{\rm P}\left(a_{11}^+\ge\eta\sqrt n\right)<\infty,
\end{align*}
which implies ${\rm E}(a_{11}^+)^2<\infty$.

\subsection{Necessity of condition (iv)}
Assume that condition (i) holds. 
Let $\mathcal N_l=\left\{j;2^l<j\le2^{l+1};\left\|x_{jj}\right\|\le2^{l/4}\right\}$, $N_l={\#}\left(\mathcal N_l\right)$ and $p={\rm P}\left(\left\|x_{11}\right\|\le2^{l/4}\right)$. When $n\in(2^{l+1},2^{l+2}]$, for $x_{jk}\neq0$ and $j,k\in \mathcal N_l$,
 construct a unit complex vector $\mathbf { z}$
by taking $z_k^*=\left(\frac{1}{\sqrt 2},0\right),z_j^*=\left(\frac{\bar\lambda_{jk}}{\sqrt 2\left\|x_{jk}\right\|},-\frac{\omega_{jk}}{\sqrt 2\left\|x_{jk}\right\|}\right)$ and $z_l^*=\left(0,0\right)\left(l\neq j,l\neq k\right)$. Then, we obtain
\begin{align*}
& s_{\max}(\mathbf Q_n)\ge\mathbf{z^*Q_nz}=\frac{1}{\sqrt n}\sum_{j,k}{ z_j^*}x_{jk}{z_k}=\frac{1}{\sqrt n}\left[\left\|x_{jk}\right\|+\frac{1}{2}\left(a_{jj}+{a_{kk}}\right)\right]\\
\Rightarrow&s_{\max}({\mathbf Q}_n)\ge2^{-l/2-1}\max_{j,k\in\mathcal N_l}\left\{\left\|x_{jk}\right\|\right\}-2^{-l/4-1}.
\end{align*}
Obviously, when $x_{jk}=\left(
                                               \begin{array}{cc}
                                                 0 & 0 \\
                                                 0 & 0 \\
                                               \end{array}
                                             \right)
$, the conclusion is still true. Moreover, we have known that
for any $\eta>\xi$,
\begin{align*}
{\rm P}\left(\max_{2^{l+1}<n\le2^{l+2}}s_{\max}\left(\mathbf Q_n\right)\ge\eta,{\rm i.o.}\right)=0.
\end{align*}
Combining the above inequality and Borel-Cantelli lemma, it follows that
\begin{align}\label{al:11}
\sum_{l=1}^{\infty}{\rm P}\left(\max_{j,k\in\mathcal N_l}\left\{\left\|x_{jk}\right\|\right\}\ge\eta2^{l/2+1}\right)<\infty.
\end{align}
Noticing that $N_l$ and $x_{jk}$ are independent, we have
\begin{align*}
&{\rm P}\left(\max_{j,k\in\mathcal N_l}\left\{\left\|x_{jk}\right\|\right\}\ge\eta2^{l/2+1}\bigg|N_l=r\right)\\
=&{\rm P}\left(\max_{1\le j<k \le r}\left\{\left\|x_{jk}\right\|\right\}\ge\eta2^{l/2+1}\right)\\
=&1-\left[1-{\rm P}\left(\left\|x_{12}\right\|\ge\eta2^{l/2+1}\right)\right]^{r(r-1)/2}.
\end{align*}
Denote $N_l\sim B\left(2^l,p\right)$, therefore, one has \begin{align}\label{al:12}
&{\rm P}\left(\max_{j,k\in\mathcal N_l}\left\{\left\|x_{jk}\right\|\right\}\ge\eta2^{l/2+1}\right)\notag\\
=&\sum_{r=0}^{2^l}{\rm P}\left(N_l=r\right){\rm P}\left(\max_{j,k\in\mathcal N_l}\left\{\left\|x_{jk}\right\|\right\}\ge\eta2^{l/2+1}\bigg|N_l=r\right)\notag\\
\ge&\sum_{r=2^{l-1}+1}^{2^l}\left(\begin{array}{cc}
2^l\\
r\end{array}\right)p^r\left(1-p\right)^{2^l-r}
\left\{1-\left[1-{\rm P}\left(\left\|x_{12}\right\|\ge\eta2^{l/2+1}\right)\right]^{r(r-1)/2}\right\}\notag\\
\ge&\frac{1}{2}\left\{1-\left[1-{\rm P}\left(\left\|x_{12}\right\|\ge\eta2^{l/2+1}\right)\right]^{2l-3}\right\}
\end{align}
where the last inequality follows from that $p$ is close to 1 for all large $l$. From (\ref{al:11}) and (\ref{al:12}), we acquire that
\begin{align*}
&\sum_{l=1}^{\infty}\left\{1-\left[1-{\rm P}\left(\left\|x_{12}\right\|\ge\eta2^{l/2+1}\right)\right]^{2l-3}\right\}<\infty\\
\Rightarrow&\prod_{l=1}^{\infty}\left[1-{\rm P}\left(\left\|x_{12}\right\|\ge\eta2^{l/2+1}\right)\right]^{2l-3}>0\\
\Rightarrow&\sum_{l=1}^{\infty}2^{2l-3}{\rm P}\left(\left\|x_{12}\right\|\ge\eta2^{l/2+1}\right)<\infty\\
\Rightarrow&{\rm E}\left\|x_{12}\right\|^4<\infty.
\end{align*}
\subsection{Necessity of condition (ii)} Assume that conditions (i) and (iv) hold. 
Firstly, we will show that $a={\rm E}\left(a_{12}\right)\le0$. Suppose that $a>0$. Let $\mathcal D_n=\left\{j\le n,\left\|x_{jj}\right\|<n^{1/4}\right\}$, $N=\#\left(\mathcal D_n\right)$ and
$\widetilde {\mathbf Q}_n=\frac{1}{\sqrt n}\left(\widetilde x_{jk}\right)$ with $\widetilde x_{jk}=\left\{\begin{array}{cc}
\left(
                       \begin{array}{cc}
                         0 & 0 \\
                         0 & 0 \\
                       \end{array}
                     \right)
& j=k\\
x_{jk}&j\neq k\end{array}\right.$. Construct a unit vector $\mathbf z^*=\left(z_1^*,\cdots,z_n^*\right)$ by taking $z_j^*=(\frac{1}{\sqrt N},0) \ {\rm if} \ j\in\mathcal D_n \ {\rm and} \ z_j^*=(0,0) \ {\rm otherwise}$,
then we have
\begin{align*}
s_{\max}(\mathbf Q_n)\ge &\mathbf {z^*Q_nz}\\
=&\mathbf {z^*(Q_n-\widetilde Q_n)z}+\mathbf {z^*(\widetilde Q_n-{\rm E}\widetilde Q_n)z}+\mathbf {z^*({\rm E}\widetilde Q_n)z}\\
\ge&\frac{1}{N\sqrt n}\sum_{j\in \mathcal D_n}a_{jj}+\mathbf {z^*(\widetilde Q_n-{\rm E}\widetilde Q_n)z}+\frac{a(N-1)}{\sqrt n}\\
\ge &-n^{-1/4}+s_{\min}\mathbf {(\widetilde Q_n-{\rm E}\widetilde Q_n)}+\frac{a(N-1)}{\sqrt n}\\
\ge &-n^{-1/4}-2\sigma+\frac{a(N-1)}{\sqrt n}\to\infty
\end{align*}
which contradicts with the assumption that $\limsup s_{\max}\left(\mathbf Q_n\right)=2\sigma$ almost surely. Here, we have used a fact that $s_{\min}\mathbf {(\widetilde Q_n-{\rm E}\widetilde Q_n)}\to-2\sigma$, a.s. which is an easy consequence of the sufficient part of the theorem.

Now, we proceed to show that ${\rm E}b_{12}={\rm E}c_{12}={\rm E}d_{12}=0$. To this end, we shall quote the following lemma:
\begin{lemma}[Lemma 2.7 in \cite{bai2010spectral}]\label{le:5}
Let $\mathbf A_n$ be an $n\times n$ skew-symmetric matrix whose elements above the diagonal are 1 and those below the diagonal are -1. Then, the eigenvalues of $\mathbf A_n$ are $\lambda_k=-i\cot\left(\pi\left(2k-1\right)/2n\right),k=1,2,\cdots,n$.
The eigenvector associated with $\lambda_k$ is $\mathbf u_k=\frac{1}{\sqrt n}\left(1,\rho_k,\cdots,\rho_k^{n-1}\right)^{\prime}$, where $\rho_k=\left(\lambda_k-1\right)/\left(\lambda_k+1\right)=\exp\left(-i\pi\left(2k-1\right)/n\right)$.
\end{lemma}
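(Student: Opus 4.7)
The plan is to verify the claim by direct substitution, exploiting the fact that $\rho_k$ is an $n$-th root of $-1$. First, I would observe that since $2k-1$ is odd,
$\rho_k^n=\exp(-i\pi(2k-1))=-1$,
which is the single arithmetic identity that makes everything work. I would then compute the $j$-th component of $\mathbf{A}_n\mathbf{u}_k$ directly. Because $\mathbf{A}_n$ has $1$'s strictly above the diagonal and $-1$'s strictly below,
$(\mathbf{A}_n\mathbf{u}_k)_j=\frac{1}{\sqrt n}\Bigl(\sum_{l=j+1}^{n}\rho_k^{l-1}-\sum_{l=1}^{j-1}\rho_k^{l-1}\Bigr).$

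Next, I would sum both geometric series (the $\rho_k\neq 1$ case is automatic, since $\rho_k$ is a non-trivial root of $-1$). The upper tail equals $\rho_k^{j-1}\rho_k(1-\rho_k^{n-j})/(1-\rho_k)$ and the lower head equals $(1-\rho_k^{j-1})/(1-\rho_k)$. Subtracting, the numerator becomes $\rho_k^j-\rho_k^n-1+\rho_k^{j-1}$. Substituting $\rho_k^n=-1$ collapses the constant terms and leaves $\rho_k^{j-1}(1+\rho_k)$. Hence
$(\mathbf{A}_n\mathbf{u}_k)_j=\frac{\rho_k^{j-1}}{\sqrt n}\cdot\frac{1+\rho_k}{1-\rho_k},$
which shows $\mathbf{A}_n\mathbf{u}_k=\lambda_k\mathbf{u}_k$ with $\lambda_k=(1+\rho_k)/(1-\rho_k)$.

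Then I would simplify $\lambda_k$ using a half-angle factorization. Writing $\rho_k=e^{-i\pi(2k-1)/n}$ and factoring $e^{-i\pi(2k-1)/2n}$ out of the numerator and denominator gives
$\lambda_k=\frac{2\cos(\pi(2k-1)/2n)}{2i\sin(\pi(2k-1)/2n)}=-i\cot\!\left(\frac{\pi(2k-1)}{2n}\right),$
matching the stated formula and also confirming the stated relation $\rho_k=(\lambda_k-1)/(\lambda_k+1)$.

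Finally, I would check completeness: the numbers $\pi(2k-1)/2n$ for $k=1,\dots,n$ lie strictly between $0$ and $\pi$ and are all distinct, so the cotangents are distinct; thus the $\lambda_k$'s are $n$ distinct eigenvalues, exhausting the spectrum of the $n\times n$ matrix $\mathbf{A}_n$. The only routine obstacle is tracking the geometric-series bookkeeping carefully to see the cancellation after using $\rho_k^n=-1$; beyond that, the argument is a direct verification and no deeper structural input is required.
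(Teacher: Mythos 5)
Your proof is correct: the geometric-series computation, the use of $\rho_k^n=-1$, the half-angle simplification giving $\lambda_k=-i\cot(\pi(2k-1)/2n)$, and the distinctness argument that exhausts the spectrum are all valid. The paper itself quotes this lemma from Bai and Silverstein without proof, and your direct-verification argument is essentially the standard one given there, so there is nothing further to compare.
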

Since we shall use matrix similarity transformation to get a similar matrix of $\mathbf Q_n$, written as:
\begin{align*}
\widetilde {\widetilde {\mathbf Q}}_n=\frac{1}{\sqrt n}\left(\begin{array}{cc}
\Sigma_1&\Sigma_2\\
-\overline{\Sigma}_2&\overline{\Sigma}_1\end{array}
\right),
\end{align*}
where $\Sigma_1=\left(\lambda_{jk}\right)_{n\times n},\Sigma_2=\left(\omega_{jk}\right)_{n\times n}$, $\bar{\Sigma}_1=\left( \bar\lambda_{jk}\right)_{n\times n}$ while $\bar{\Sigma}_2=\left( \bar{\omega}_{jk}\right)_{n\times n}$.
Of course, $\widetilde {\widetilde {\mathbf Q}}_n$ has the same eigenvalues as ${\mathbf Q}_n$. Let $\mathbf A_n$ be an $n\times n$ skew-symmetric matrix whose elements above the diagonal are 1 and those below the diagonal are -1 and
let $\breve{\mathbf Q}_n$ be the matrix obtained from $\widetilde {\widetilde {\mathbf Q}}_n$ by replacing $\Sigma_1,\Sigma_2$'s diagonal elements with zero. Let $\mathbf J$ be the $n\times n$ matrix of $1$'s and $\mathbf I$ be the $n\times n$ identity matrix. Suppose ${\rm E}b_{12}=b,{\rm E}c_{12}=c,{\rm E}d_{12}=d$,
then write
\begin{small}
\begin{align*}
&{\rm E}\breve {\mathbf Q}_n\\
=&\frac{1}{\sqrt n}\left(a\left(\begin{array}{cc}
\mathbf J-\mathbf I&0\\
0&\mathbf J-\mathbf I\end{array}
\right)+ib\left(\begin{array}{cc}
\mathbf A_n&0\\
0&-\mathbf A_n\end{array}
\right)+c\left(\begin{array}{cc}
0&\mathbf A_n\\
-\mathbf A_n&0\end{array}
\right)+id\left(\begin{array}{cc}
0&\mathbf A_n\\
\mathbf A_n&0\end{array}
\right)\right)\\
=&{\rm \mathbf R_n}+{\rm \mathbf M_n},
\end{align*}
\end{small}
where \begin{align*}
{\rm \mathbf R_n}
=\frac{a}{\sqrt n}\left(\begin{array}{cc}
\mathbf J-\mathbf I&0\\
0&\mathbf J-\mathbf I\end{array}
\right)
\end{align*}
\begin{align*}
{\rm \mathbf M_n}
=\frac{1}{\sqrt n}\left(ib\left(\begin{array}{cc}
\mathbf A_n&0\\
0&-\mathbf A_n\end{array}
\right)+c\left(\begin{array}{cc}
0&\mathbf A_n\\
-\mathbf A_n&0\end{array}
\right)+id\left(\begin{array}{cc}
0&\mathbf A_n\\
\mathbf A_n&0\end{array}
\right)\right),
\end{align*}
Now, we have done the preparatory work and will come to finish our proof.
We shall accomplish this by three steps.

Firstly, Suppose $b\neq0$. Define a vector $\mathbf z=\left(\mathbf u^{\prime},\mathbf v^{\prime}\right)^{\prime},\mathbf u=\left(u_1,\cdots,u_n\right)^{\prime},
 \mathbf v=\left(v_1,\cdots,v_n\right)^{\prime}$ with $$\left\{u_j,j\in\mathcal D_n\right\}=\frac{1}{\sqrt {2N}}\left\{1,e^{-i\pi {\rm sign}\left(b\right)\left(2k-1\right)/N},\cdots,e^{-i\pi{\rm sign}\left(b\right)\left(2k-1\right)\left(N-1\right)/N}\right\},$$
$$\left\{v_j,j\in\mathcal D_n\right\}=\frac{1}{\sqrt{2 N}}\left\{1,e^{i\pi{\rm sign}\left(b\right)\left(2k-1\right)/N},\cdots,e^{i \pi{\rm sign}\left(b\right)\left(2k-1\right)\left(N-1\right)/N}\right\}.$$ By Lemma \ref{le:5},

\begin{align*}\label{al:13}
&\mathbf z^*{\rm \mathbf M_n}\mathbf z\notag\\
=&\frac{1}{\sqrt n}\left(ib\mathbf z^*\left(\begin{array}{cc}
\mathbf A_n&0\\
0&-\mathbf A_n\end{array}
\right)\mathbf z+c\mathbf z^*\left(\begin{array}{cc}
0&\mathbf A_n\\
-\mathbf A_n&0\end{array}
\right)\mathbf z
+id\mathbf z^*\left(\begin{array}{cc}
0&\mathbf A_n\\
\mathbf A_n&0\end{array}
\right)\mathbf z\right)\notag\\
=&\frac{1}{\sqrt n}\bigg(\left|b\right|\cot\frac{\left(2k-1\right)\pi}{2N}+c\left(\mathbf{ u^*A_nv}-\mathbf{v^*A_nu}\right)+id\left(\mathbf{ u^*A_nv}+\mathbf{v^*A_nu}\right)\bigg).
\end{align*}
\
Note that $\overline{\mathbf u}=\mathbf v$, we obtain
\begin{equation}\label{al:14}
  \mathbf{ u^*A_nv}=\mathbf{v^*A_nu}=0.
\end{equation}
Moreover,
\begin{align*}
\mathbf z^*{\rm \mathbf R_n}\mathbf z=&\frac{a}{\sqrt n} \mathbf z^*\left(\begin{array}{cc}
\mathbf {J-I}&0\\
0&\mathbf {J-I}\end{array}
\right)\mathbf z\notag\\
=&\frac{a}{\sqrt n} \left(\mathbf{ u^*Ju}+\mathbf{v^*Jv}-1\right)\\
=&\frac{a}{2\sqrt nN} \left(\left|\sum_{j=0}^{N-1}e^{-i\pi{\rm sign}\left(b\right)\left(2k-1\right)j/N}\right|^2+\left|\sum_{j=0}^{N-1}e^{i\pi{\rm sign}\left(b\right)\left(2k-1\right)j/N}\right|^2\right)-\frac{a}{\sqrt n} \\
=&\frac{a}{2\sqrt nN} \left(\left|\frac{1-e^{-i\pi{\rm sign}\left(b\right)\left(2k-1\right)}}{1-e^{-i\pi{\rm sign}\left(b\right)\left(2k-1\right)/N}}\right|^2+\left|\frac{1-e^{i\pi{\rm sign}\left(b\right)\left(2k-1\right)}}{1-e^{i\pi{\rm sign}\left(b\right)\left(2k-1\right)/N}}\right|^2\right)-\frac{a}{\sqrt n}\\
\le&\frac{a}{\sqrt nN\sin^2{\left(\pi\left(2k-1\right)/2N\right)}} -\frac{a}{\sqrt n}.
\end{align*}
Therefore, by taking $k=[n^{1/3}]$,
\begin{align*}
s_{\max}\left(\widetilde {\widetilde {\mathbf Q}}_n\right)\ge&\mathbf{z^*\widetilde {\widetilde {\mathbf Q}}_nz}=\mathbf{z^*\left(\widetilde {\widetilde {\mathbf Q}}_n-\breve {\mathbf Q}_n\right)z}+\mathbf{z^*\left(\breve {\mathbf Q}_n-{\rm E}\breve {\mathbf Q}_n\right)z}+\mathbf{z^*({\rm E}\breve {\mathbf Q}_n)z}\\
\ge&-n^{-1/4}+s_{\min}\left(\breve {\mathbf Q}_n-{\rm E}\breve {\mathbf Q}_n\right)+\frac{\left|b\right|}{\sqrt n\tan\left(\pi\left(2k-1\right)/2N\right)}\\
-&\frac{a}{\sqrt nN\sin^2{\left(\pi\left(2k-1\right)/2N\right)}}-\frac{a}{\sqrt n}\\
\to&\infty
\end{align*}
where the last procedure follows from the fact $N/n\to1,{\rm a.s.},\lim_{x\to0}\frac{\sin x}{x}=1 \ {\rm and} \ \lim_{x\to0}\frac{\tan x}{x}=1 $. Thus, combining the arguments above, we obtain $b=0$.

Secondly, suppose $c\neq0$. Define a vector $\mathbf z=(\left(\left(1-i\right)/\sqrt2)\mathbf u^{\prime},(\left(1+i\right)/\sqrt2)\mathbf u^{\prime}\right)^{\prime},\mathbf u=\left(u_1,\cdots,u_n\right)^{\prime}$ with
$$\left\{u_j,j\in\mathcal D_n\right\}=\frac{1}{\sqrt {2N}}\left\{1,e^{-i \pi{\rm sign}\left(c\right)\left(2k-1\right)/N},\cdots,e^{-i\pi{\rm sign}\left(c\right)\left(2k-1\right)\left(N-1\right)/N}\right\}.$$
By Lemma \ref{le:5},
\begin{align}
&\mathbf z^*{\mathbf M}_n\mathbf z\notag\\
=&\frac{1}{\sqrt n}\left(ib\mathbf z^*\left(\begin{array}{cc}
\mathbf A_n&0\\
0&-\mathbf A_n\end{array}
\right)\mathbf z+c\mathbf z^*\left(\begin{array}{cc}
0&\mathbf A_n\\
-\mathbf A_n&0\end{array}
\right)\mathbf z
+id\mathbf z^*\left(\begin{array}{cc}
0&\mathbf A_n\\
\mathbf A_n&0\end{array}
\right)\mathbf z\right)\notag\\
=&\frac{1}{\sqrt n}\left|c\right|\cot\frac{\left(2k-1\right)\pi}{2N}.\notag
\end{align}
Moreover,
\begin{align*}
\mathbf z^*{\mathbf R}_n\mathbf z=&\frac{a}{\sqrt n} \mathbf z^*\left(\begin{array}{cc}
\mathbf {J-I}&0\\
0&\mathbf {J-I}\end{array}
\right)\mathbf z\notag\\
=&\frac{a}{\sqrt n} \left(2\mathbf{ u^*Ju}-1\right)\\
\le&\frac{a}{\sqrt nN\sin^2{\left(\pi\left(2k-1\right)/2N\right)}} -\frac{a}{\sqrt n}.
\end{align*}
Therefore, by taking $k=[n^{1/3}]$,
\begin{align*}
s_{\max}\left(\widetilde {\widetilde {\mathbf Q}}_n\right)\ge&\mathbf{z^*\widetilde {\widetilde {\mathbf Q}}_nz}=\mathbf{z^*\left(\widetilde {\widetilde {\mathbf Q}}_n-\breve {\mathbf Q}_n\right)z}+\mathbf{z^*\left(\breve {\mathbf Q}_n-{\rm E}\breve {\mathbf Q}_n\right)z}+\mathbf{z^*({\rm E}\breve {\mathbf Q}_n)z}\\
\ge&-n^{-1/4}+s_{\min}\left(\breve {\mathbf Q}_n-{\rm E}\breve {\mathbf Q}_n\right)+\frac{\left|c\right|}{\sqrt n\tan\left(\pi\left(2k-1\right)/2N\right)}\\
-&\frac{a}{\sqrt nN\sin^2{\left(\pi\left(2k-1\right)/2N\right)}}-\frac{a}{\sqrt n}\\
\to&\infty.
\end{align*}
Thus, we obtain $c=0$.

Finally, suppose $d\neq0$. Define a vector $\mathbf z=\left(\mathbf u^{\prime},\mathbf u^{\prime}\right)^{\prime},\mathbf u=\left(u_1,\cdots,u_n\right)^{\prime}$ with
$$\left\{u_j,j\in\mathcal D_n\right\}=\frac{1}{\sqrt {2 N}}\left\{1,e^{-i\pi{\rm sign}\left(d\right)\left(2k-1\right)/N},\cdots,e^{-i\pi{\rm sign}\left(d\right)\left(2k-1\right)\left(N-1\right)/N}\right\}.$$
Also, by Lemma \ref{le:5},
\begin{align}
&\mathbf z^*{\mathbf M}_n\mathbf z\notag\\
=&\frac{1}{\sqrt n}\left(ib\mathbf z^*\left(\begin{array}{cc}
\mathbf A_n&0\\
0&-\mathbf A_n\end{array}
\right)\mathbf z+c\mathbf z^*\left(\begin{array}{cc}
0&\mathbf A_n\\
-\mathbf A_n&0\end{array}
\right)\mathbf z
+id\mathbf z^*\left(\begin{array}{cc}
0&\mathbf A_n\\
\mathbf A_n&0\end{array}
\right)\mathbf z\right)\notag\\
=&\frac{1}{\sqrt n}\left|d\right|\cot\frac{\left(2k-1\right)\pi}{2N}.\notag
\end{align}
Moreover,
\begin{align*}
\mathbf z^*{\mathbf R}_n\mathbf z=&\frac{a}{\sqrt n} \mathbf z^*\left(\begin{array}{cc}
\mathbf {J-I}&0\\
0&\mathbf {J-I}\end{array}
\right)\mathbf z\notag
=\frac{a}{\sqrt n} \left(2\mathbf{ u^*Ju}-1\right)\\
\le&\frac{a}{\sqrt nN\sin^2{\left(\pi\left(2k-1\right)/2N\right)}} -\frac{a}{\sqrt n}.
\end{align*}
Therefore, taking $k=[n^{1/3}]$,
\begin{align*}
s_{\max}\left(\widetilde {\widetilde {\mathbf Q}}_n\right)\ge&\mathbf{z^*\widetilde {\widetilde {\mathbf Q}}_nz}=\mathbf{z^*\left(\widetilde {\widetilde {\mathbf Q}}_n-\breve {\mathbf Q}_n\right)z}+\mathbf{z^*\left(\breve {\mathbf Q}_n-{\rm E}\breve {\mathbf Q}_n\right)z}+\mathbf{z^*({\rm E}\breve {\mathbf Q}_n)z}\\
\ge&-n^{-1/4}+s_{\min}\left(\breve {\mathbf Q}_n-{\rm E}\breve {\mathbf Q}_n\right)+\frac{\left|d\right|}{\sqrt n\tan\left(\pi\left(2k-1\right)/2N\right)}\\
-&\frac{a}{2\sqrt nN\sin^4{\left(\pi\left(2k-1\right)/2N\right)}}-\frac{a}{\sqrt n}\\
\to&\infty.
\end{align*}
Thus, we get $d=0$.
\subsection{Necessity of condition (iii)}
Applying the sufficiency part, we can get condition (iii).

So far we have completed the proof of Theorem \ref{th1}.


\end{document}